\newtheorem{theorem}{Theorem}
\newtheorem{definition}[theorem]{Definition}
\newtheorem{proposition}[theorem]{Proposition}
\newtheorem{corollary}[theorem]{Corollary}
\newtheorem{lemma}[theorem]{Lemma}
\theoremstyle{remark}
\newtheorem{example}[theorem]{Example}
\newtheorem{algorithm}[theorem]{Algorithm}
\newtheorem{remark}[theorem]{Remark}
\renewcommand{\L}{\mathcal{L}}
\renewcommand{\v}{\overrightarrow}
\def\lcm{{\rm lcm}}
\def\E{{\sf Z}}
\def\N{\mathbb{N}}
\def\R{\mathbb{R}}
\def\Z{\mathbb{Z}}
\def\Q{\mathbb{Q}}
\def\int{\mathrm{int}}
\newcommand{\V}{\textsf V}
\renewcommand{\H}{\mathcal H}
\newcommand{\F}{\mathcal F}
\renewcommand{\l}{{\tt L}}
\title{Computation of Delta sets of numerical monoids}
\author{J. I. Garc\'{\i}a-Garc\'{\i}a\footnote{Departamento de Matem\'aticas, Universidad de C\'adiz,
E-11510 Puerto Real (C\'{a}diz, Spain). E-mail: ignacio.garcia@uca.es. Partially supported by MTM2010-15595 and Junta de Andaluc\'{\i}a group FQM-366. }\\
M. A. Moreno-Fr\'{\i}as\footnote{Departamento de Matem\'aticas, Universidad de C\'adiz,
E-11510 Puerto Real (C\'{a}diz, Spain). E-mail: mariangeles.moreno@uca.es. Partially supported by MTM2008-06201-C02-02 and Junta de Andaluc\'{\i}a group FQM-298.}\\
A. Vigneron-Tenorio\footnote{Departamento de Matem\'aticas, Universidad de C\'adiz,
E-11406 Jerez de la Frontera (C\'{a}diz, Spain). E-mail: alberto.vigneron@uca.es. Partially supported by MTM2012-36917-C03-01 and Junta de Andaluc\'{\i}a group FQM-366.}\\
}
\date{}
\begin{document}

\maketitle

\begin{abstract}
Let  $\{a_1,\dots,a_p\}$ be the minimal generating set of a numerical monoid $S$.
For any $s\in S$, its Delta set is defined by $\Delta(s)=\{l_{i}-l_{i-1}\mid i=2,\dots,k\}$ where $\{l_1<\dots<l_k\}$  is the set $\{\sum_{i=1}^px_i\mid s=\sum_{i=1}^px_ia_i \textrm{ and } x_i\in \N \textrm{ for all }i\}.$
The Delta set of a numerical monoid $S$, denoted by $\Delta(S)$,  is the union of all the sets $\Delta(s)$ with $s\in S.$
As proved in \cite{Chapman-Hoyer-Kaplan},  there exists a bound $N$ such that
$\Delta(S)$ is the union of the sets $\Delta(s)$ with $s\in S$ and $s<N$.
In this work, we obtain a sharpened bound
and we present an algorithm for the computation of $\Delta(S)$ that requires only the factorizations of  $a_1$ elements.

\smallskip
{\small \emph{Keywords:} Delta set, non-unique factorization, numerical monoid, numerical semigroup}.

\smallskip
{\small \emph{MSC-class:} 20M14 (Primary),  20M05 (Secondary).}
\end{abstract}

\section*{Introduction}
The study of the structure of $\Delta(S)$ and its computation plays an important role in the theory of  non-unique factorization.
For example in \cite{Bowles-Chapman-Kaplan-Reiser}, it found a rigorous study of $\Delta(S)$ for  numerical monoids that shows the structure of $\Delta(S)$ can be very complex even in the case $S$ is generated by only three elements.
Also in \cite{Bowles-Chapman-Kaplan-Reiser},
some bounds for the maximum and the minimum of $\Delta(S)$ with $S$ a numerical monoid are given.
Another interesting work is \cite{Chapman-Garcia-Llena-Malyshev-Steinberg} where
some results concerning the structure
of the Delta sets
of BF-monoids are proved  and
it is shown
that the minimum and the maximum of $\Delta(S)$
can be completely determined using the Betti elements of $S$.
In \cite{Chapman-Malyshev-Steinberg},
the conditions which must be satisfied by the generators of $S=\langle a_1,a_2,a_3\rangle$ for $\Delta(S)$
being a singleton are shown.
One of the main results used to compute $\Delta(S)$ is given in \cite{Geroldinger};
it proves that every commutative cancellative reduced atomic monoid $S$ satisfies  that $\min(\Delta(S))=\gcd(\Delta(S))$.
A method for computing $\Delta(S)$ is found in \cite{Chapman-Hoyer-Kaplan};
in that paper,
it is  proved that
for every  numerical monoid $S$ with minimal system of generators $a_1<\dots<a_p$,
and
for every element $s\in S$ such that $s\geq 2pa_2a_p^2$
it must hold $\Delta(s)=\Delta(s+a_1a_p)$.
Thus, for a primitive numerical monoid $S$ we have $\Delta(S)=\cup_{s\in S,s<N}\Delta(s)$ with $N=2pa_2a_p^2+a_1 a_p$, and
this implies that the computation of $\Delta(S)$ requires only a finite number of steps.

A different approach
is to study $\Delta(S)$  in different types of  monoids.
For instance
in \cite{Bowles-Chapman-Kaplan-Reiser}, it is proved that if $a_1$ and $a_2$ are integers satisfying $1<a_1<a_2$ and $\gcd(a_1,a_2)=1$, then $\Delta(\langle a_1,a_2\rangle)=\{a_2-a_1\}$.
If $S$ is a numerical monoid generated by a $k$-interval, then $\Delta(S)=\{k\}$, and if $S=\langle n,n+k,(k+1)n-k\rangle $ where $n\geq 3$, $k\geq 1$ and $\gcd(n,k)=1$, then $\Delta(S)=[k,2k,\dots,\lfloor \frac{n+k-1} {k+2}\rfloor k]$.
In \cite{Chapman-Malyshev-Steinberg}, the elements of the set  $\Delta(S)$  with $S=\langle a_1,a_2,a_3\rangle$ a numerical monoid  are characterized, and in the case $a_1=3$ it is proved that
$\left\{ \frac{a_2+a_3}{3}-2\right\}\subseteq \Delta(S) \subseteq [1,\frac{a_2+a_3}{3}-2]\cap \N$.
In \cite{Chapman-Kaplan-Lemburg-Niles-Zlogar}, it is  shown that for an increasing sequence $r_1,\dots,r_t$ of positive integers, a positive integer $n$ and $S_n=\langle n,n+r_1,\dots,n+r_t\rangle$ a numerical monoid, there exists a positive integer $N$ such that  if $n>N$, then $|\Delta(S_n)|=1$.
Other works in this area may be found in
\cite{Baginski-Chapman-Schaeffer,Chapman-Daigle-Hoyer-Kaplan,Geroldinger-Halter,
Haarmann-Kalauli-Moran-oneill-pelayo,Halter-Koch}.

Despite the amount of existing works, the computation of $\Delta(S)$ for a given numerical monoid is not an easy task. The main problems are the high values of the bounds and
the large amount of factorizations that are required even in the cases the bound  is low.
In this work we cover some gaps in the knowledge of the Delta sets of numerical monoids.
We give explicit bounds that improve the bounds obtained in previous works
and we also use some improvements in the computation of the expressions of some elements.
All these advances allow us to get a better algorithm to  compute  the Delta sets of numerical monoids. The theoretical results of this work are complemented with the software \cite{programita} developed in {\tt Mathematica} that  provides us functions to compute the Delta set of a numerical monoid.

The contents of this paper are organized as follows.
In Section \ref{preliminaires}, we introduce some definitions and  notations used in this work. In Section \ref{sec:principal},
we study  the structure of $\Delta(S)$.
These results are used to get the existence of the bound $N_S$.
In Section \ref{sec:formulas}, a formulation of $N_S$ is given. Finally, in Section \ref{sec:compativa}, we give an algorithm to compute $\Delta(S)$, and we illustrate our method with some examples showing their execution times.

\section{Preliminaries}\label{preliminaires}
Let $\N$ be the set of nonnegative integers and let
$\Q_{\geq}$ be the set of nonnegative rational numbers.
If $S$ is an additive submonoid of  $\N$, then $S$ is called a numerical monoid.
We say that the integers $a_1,\dots,a_p$ with $p\in\N\setminus\{0\}$ generate $S$ if $S=\{x_1a_1+\dots+x_pa_p\mid x_i\in \N \textrm{ for all }i=1,\dots,p\}$; this is denoted  by $S=\langle a_1,\dots, a_p\rangle$.
It
is well known
 that the minimal (in terms of cardinality and set inclusion) generating set of $S$ is unique.
In the sequel, we assume  that $\{ a_1,\dots, a_p \}$ is the minimal generating set of $S$ and
$a_1<\dots<a_p$.
A numerical monoid  $S=\langle a_1,\dots, a_p\rangle$ is primitive when
$\gcd( a_1,\dots, a_p )=1$; these monoids are also known as numerical semigroups and
 every numerical monoid is isomorphic to a primitive numerical monoid. Hence, we can narrow our study to the primitive case.
If $S$ is a primitive numerical monoid, then there exists an integer $\F(S)\notin S$ such that $s>\F(S)$ implies that $s\in S$. This integer is known as the Frobenius number of $S$.
For more details on  numerical monoids, the  reader is directed to the monograph \cite{Rosales-Garcia-semigrupos-numericos}.

For numerical monoid $S=\langle a_1,\dots,a_p\rangle$, it must hold $S\cong\N^p/\sim_M$, where $M$ is the subgroup of $\Z^p$ of rank $p-1$ defined by the equation $a_1 x_1 + \dots + a_p x_p=0$
and $\sim_M$ is defined as $x\sim_M y$ if and only if $x-y\in M$ for all $x,y\in\N^p$
(see \cite{RosalesLibro} for further details).
Denote by $\E(s)$ the set $\{(x_1,\dots,x_p)\in\N^p\mid \sum_{i=1}^px_ia_i=s\}$ for every $s\in \N$.
For  all $x,y\in\N^p$ and every $s\in S$, two elements $x$, $y$ belong to $\E(s)$ if and only if $x\sim_M y$.
Define
the linear function $\l:\Q^p\to \Q$ with $\l(x_1,\dots,x_p)=\sum_{i=1}^px_i$.

\begin{definition}
Given
$s\in S$ and $S=\langle a_1,\dots,a_p\rangle$, set
$
 \L(s)=\{ \l(x_1,\dots,x_p)\mid (x_1,\dots,x_p)\in\E(s)
\}
$,
which is known as the set of lengths of $s$ in $S$.
Since $S$ is a numerical monoid, it is not hard to prove that
 this set of lengths is bounded, and so there exist  some positive integers
$l_1<\dots<l_k$ such that
$\L(s)=\{l_1,\dots, l_k\}$. The set
\[
\Delta(s)=\{l_i-l_{i-1}: 2\leq i \leq k\}
\]
is known as the Delta set of s. We globalize the notion of the Delta
set by setting
\[
\Delta(S)=\bigcup_{s\in S} \Delta(s).
\]
The set $\Delta(S)$ is called the Delta set of $S$.
\end{definition}

\section{The structure of $\Delta(S)$}\label{sec:principal}

The computation of $\Delta(S)$ with $S$ a numerical monoid generated by two elements is solved in \cite{Bowles-Chapman-Kaplan-Reiser}. Hence,
we only consider primitive numerical monoids minimally generated by at least three elements.
Denote by $\{e_1,\dots,e_p\}$ the canonical basis of $\R^p$.

\begin{lemma}\label{lema:d}
Let $S=\langle a_1,\dots, a_p\rangle\cong\N^p/\sim_M$ be a numerical monoid.
Then
$\min (\Delta(S))=\min\{\l(m)\mid \l(m)>0,~m\in M\}$.
Furthermore, if $M=\langle m_1,\dots,m_{p-1}\rangle$, then $\min (\Delta(S))=\gcd(\l(m_1),\dots,\l(m_{p-1}))$.
\end{lemma}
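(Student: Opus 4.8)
The plan is to establish the first equality by a double inequality, and then deduce the $\gcd$ statement from it. Write $d = \min\{\l(m)\mid \l(m)>0,\ m\in M\}$; this is well defined since $M$ is a lattice and $\l$ takes integer values on $\Z^p$, so the positive values of $\l$ on $M$ form a nonempty subset of $\Z_{>0}$. For the inequality $\min(\Delta(S))\le d$, I would pick $m\in M$ with $\l(m)=d$, split $m=m^+-m^-$ into its positive and negative parts in $\N^p$, and set $s=\sum_i m^+_i a_i=\sum_i m^-_i a_i\in S$ (the two sums agree because $m\in M$). Then $m^+$ and $m^-$ both lie in $\E(s)$ with $\l(m^+)-\l(m^-)=d>0$, so $\L(s)$ contains two distinct lengths differing by $d$. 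Since the successive differences in $\Delta(s)$ sum to the total spread $\max\L(s)-\min\L(s)\ge d$ and each is positive, at least one element of $\Delta(s)$ is $\le d$; hence $\min(\Delta(S))\le d$.

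For the reverse inequality $\min(\Delta(S))\ge d$, take any $s\in S$ and any two consecutive lengths $l_{i-1}<l_i$ in $\L(s)$ realised by factorizations $x,y\in\E(s)$ with $\l(x)=l_{i-1}$, $\l(y)=l_i$. Then $y-x\in M$ and $\l(y-x)=l_i-l_{i-1}>0$, so $l_i-l_{i-1}\ge d$ by definition of $d$. As this holds for every element of every $\Delta(s)$, we get $\min(\Delta(S))\ge d$, completing the first equality. I do not expect a serious obstacle here; the one point needing a line of care is confirming $\min(\Delta(S))\le d$ rather than merely that $d$ lies in the length spread — but this is immediate from positivity of the consecutive gaps.

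For the furthermore part, suppose $M=\langle m_1,\dots,m_{p-1}\rangle$ and let $g=\gcd(\l(m_1),\dots,\l(m_{p-1}))$. Every $m\in M$ is a $\Z$-linear combination of the $m_j$, so $\l(m)$ is a $\Z$-linear combination of the $\l(m_j)$, hence a multiple of $g$; in particular every positive value of $\l$ on $M$ is at least $g$, so $d\ge g$. Conversely, by B\'ezout there are integers $c_j$ with $\sum_j c_j\l(m_j)=g>0$; then $m=\sum_j c_j m_j\in M$ has $\l(m)=g>0$, so $d\le g$. Thus $d=g$, and combining with the first part gives $\min(\Delta(S))=g=\gcd(\l(m_1),\dots,\l(m_{p-1}))$. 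The only subtlety worth flagging is that $\l$ restricted to $M$ is a group homomorphism $M\to\Z$ whose image is exactly $g\Z$; once that is observed, both halves are one-line B\'ezout arguments.
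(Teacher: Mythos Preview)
Your plan is correct and follows essentially the same route as the paper: the paper produces two factorizations by choosing $\gamma\in\N^p$ with $\gamma+m\in\N^p$ (your $m^-$ and $m^+$ do the same job), then argues exactly as you do that some consecutive gap in $\L(s)$ is at most $\l(m)$, and the $\gcd$ part is the same linearity/B\'ezout observation that the paper compresses into one line. One wording slip to fix in your $\min(\Delta(S))\le d$ step: the fact you need is that the consecutive gaps \emph{between the positions of $\l(m^-)$ and $\l(m^+)$ in $\L(s)$} sum to exactly $d$, so one of them is $\le d$; as you wrote it (``the successive differences sum to the total spread $\ge d$, so one is $\le d$'') the implication does not follow.
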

\begin{proof}
Since $a_2e_1-a_1e_2\in M$ and $\l(a_2e_1-a_1e_2)=a_2-a_1>0$, then $\{\l(m)\mid \l(m)>0,~m\in M\}\neq \emptyset$.

For every $l\in \Delta(S)$, there exist $s\in S$ and $\gamma,\gamma'\in\E(s)$ such that $l=\l(\gamma)-\l(\gamma')=\l(\gamma-\gamma')$. Since $\gamma-\gamma'\in M$, we obtain that $l\geq \min\{\l(m)\mid \l(m)>0,~m\in M\}$, and thus $\min(\Delta(S))\geq \min\{\l(m)\mid \l(m)>0,~m\in M\}$.

Let  $m$ be an element of $M$ such that $\l(m)>0$. It is easy to find $\gamma\in\N^p$ fulfilling  that $\gamma+m\in\N^p$.
Clearly,
there exists $s\in S$ such that $\gamma,\gamma+m\in\E(s)$.
Since $\l(m)>0$,  using the linearity of $\l$, we have  $\l(\gamma)<\l(\gamma+m)$.
If $\L(s)=\{l_1,\dots,l_t\}$, there exist $1\leq i<j\leq t$ such that
$l_i=\l(\gamma)$ and $l_j=\l(\gamma+m)$. We distinguish two cases: $i+1=j$ and $i+1<j$.
If $i+1=j$, then $\l(m)=\l(\gamma+m)-\l(\gamma)=l_{i+1}-l_i\in\Delta(s)$, and therefore $\min(\Delta(S))\leq \min(\Delta(s))\leq \l(m)$. If $i+1<j$, then $\l(m)=l_j-l_i>l_{i+1}-l_i\in\Delta(s)$, and thus
$\min(\Delta(S))\leq \min(\Delta(s))< \l(m)$.
Thus,
$\min(\Delta(S))\leq\min\{\l(m)\mid m\in M,~\l(m)>0\}$.

We have proved that $\min(\Delta(S))=\min\{\l(m)\mid m\in M,~\l(m)>0\}$.
Since $\{m_1,\dots,m_{p-1}\}$ is a system of generators of $M$,
from the linearity of $\l$
 we deduce that
$\min\{\l(m)\mid m\in M,~\l(m)>0\}=\gcd(\l(m_1),\dots,\l(m_{p-1}))$. This implies that
$\min(\Delta(S))=\gcd(\l(m_1),\dots,\l(m_{p-1}))$.

\end{proof}

On the sequel we denote by $d$ the element $\min(\Delta(S))$.
One of the consequences of  Lemma \ref{lema:d} is that
 $d$  divides
 $\l(\gamma)-\l(\gamma')$
 for every $s\in S$ and every $\gamma,\gamma'\in\E(s)$, and therefore $d$ divides all the elements of $\Delta(S)$.

\begin{definition}\label{d:v}
Under the assumptions of  Lemma \ref{lema:d},
there exist $u_1,\dots,u_{p-1}\in\Z$ such that $d=
u_1 \l(m_1)+\dots+u_{p-1}\l(m_{p-1})$.
We refer to the vectors $\v v = u_1 m_1+\dots+u_{p-1}m_{p-1}\in M$ as the
minimum length increase  vectors.
\end{definition}

\begin{definition}
Under the assumptions of  Lemma \ref{lema:d},
define $\v h=\frac{d}{a_p-a_1}(a_pe_1-a_1e_p) \in\Q^p$.
\end{definition}

Note that  every minimum length increase  vector $\v v$ and  the vector $\v h$ verify $\l(\v v)=\l(\v h)=d$.

Consider $E$ the subgroup of $\Z^p$ defined by the equation $x_1+\dots+x_p=0$ (note that for every $\v c\in E$, $\l(\v c)=0$).
Denote by $\V(E)$ the vectorial subspace of $\Q^p$ defined by the equation  $x_1+\dots+x_p=0$.
It satisfies $E\subset \V(E)$,  $\dim(\V(E))=p-1$ and   $\v v,\v h\not \in \V(E)$.
For every $s\in\N$, let  $\H_s$ be the affine hyperplane $\{(x_1,\dots,x_p)\in\Q^p\mid \sum_{i=1}^px_ia_i=s\}$.
The set $\H_0$ is the vectorial subspace over $\Q$ generated by $M$, its defining equation is  $a_1x_1+\dots+a_px_p=0$ and it verifies $\dim (\H_0)=p-1$.

\begin{definition}
Let $S=\langle a_1,\dots, a_p\rangle$ be a numerical monoid.
 Define
$\v {q_i}=\frac{1}{\gcd(a_i-a_p,-a_1+a_p,a_1-a_i)}((a_i-a_p)e_1+(-a_1+a_p)e_i+(a_1-a_i)e_p)\in\Z^p$
for every $i=2,\dots,p-1$.
\end{definition}
It is straightforward to prove that the vectors $\v {q_i}$ verify the defining equations of $M$ and $E$. Therefore,
$\v {q_i}\in M\cap E$ for every $i=2,\dots,p-1$.
Note that, since the $i$th coordinate of $\v {q_i}$ is greater than zero for all $i=2,\dots,p-2$ (recall that $a_1<\dots<a_p$), the set $\{\v{q_2},\dots,\v q_{p-1}\}$ is $\Q$-linearly independent.

\begin{remark}\label{r:base}
The defining  equations of $\V(E)$ and $\H_0$ are $\Q$-linearly independent. Thus, $\dim(\H_0\cap \V(E))=p-2$. Since $\{\v {q_2},\dots,\v q_{p-1}\}\subset M\cap E\subset\H_0\cap \V(E)$ is a linearly independent set,  this set is a basis of $\H_0\cap\V(E)$.
\end{remark}

\begin{definition}
Let $S=\langle a_1,\dots, a_p\rangle$ be a numerical monoid.
For all $s\in\N$ and for all $i\in\{1,\dots,p\}$, define
$X_i(s)=\frac s {a_i} e_i\in\Q^p_{\geq }$, the point of  intersection of the affine hyperplane $\H_s$ with the $x_i$-axis.
These elements verify  $\l(X_i(s))=s/a_i$ and $\l(X_1(s))>\dots>\l(X_p(s))$.
\end{definition}

\begin{definition}
Let $S=\langle a_1,\dots, a_p\rangle$ be a numerical monoid.
For all $s\in\N$ and for all $i\in\{1,\dots,p\}$, define
$P_{i}(s)=\frac{s(a_i-a_p)}{a_i(a_1-a_p)}e_1+
\frac{s(a_1-a_i)}{a_i(a_1-a_p)}e_p\in\Q^p_{\geq }$.
Clearly, $P_1(s)=X_1(s)$, $P_p(s)=X_p(s)$, $\l(P_i(s))=\l(X_i(s))$  for all $i\in\{2,\dots,p-1\}$ and
 $\l(X_1(s))=\l(P_1(s))>\l(P_2(s))>\dots>\l(P_{p-1}(s))>\l(P_p(s))=\l(X_p(s))$.
\end{definition}

For every $A,B\in\Q^p$, denote by $\overline{AB}$
the set  $\{ A+\lambda \overrightarrow{AB}|\lambda\in\Q,0\leq\lambda\leq 1\}$,
the line segment with endpoints $A$ and $B$.
Note that
the points  $P_i(s)$ can be expressed as
$P_i(s)=\frac{a_1(a_i-a_p)}{a_i(a_1-a_p)}X_1(s) +\frac{a_p(a_1-a_i)}{a_i(a_1-a_p)}X_p(s)$ where  $\frac{a_1(a_i-a_p)}{a_i(a_1-a_p)}+\frac{a_p(a_1-a_i)}{a_i(a_1-a_p)}=1$,
$0\leq \frac{a_1(a_i-a_p)}{a_i(a_1-a_p)}$ and  $0\leq \frac{a_p(a_1-a_i)}{a_i(a_1-a_p)}$. This implies that
$P_i(s)=X_1(s)+\left(1-\frac{a_1(a_i-a_p)}{a_i(a_1-a_p)}\right)\overrightarrow{X_1(s)X_p(s)}$, and thus
$P_i(s)\in \overline{X_1(s) X_p(s)}$.

Denote by $r$ the line defined by $X_1(s)$ and $X_p(s)$. The point $X_1(s)$ belongs to the $x_1$-axis and  $X_p(s)$ to the  $x_p$-axis. This implies that
$\overline {X_1(s) X_p(s)}$ is equal to $r\cap \Q^p_{\geq}$.
For every $s\in\N$, denote $R(s)=P_2(s)+\v h$
and $R'(s)=P_{p-1}(s)-\v h$.
Since $P_2(s),P_{p-1}(s)\in \overline{X_1(s) X_p(s)}\subset \Q^p_{\geq}$ and
$\v h$ is proportional to the vector $\v {X_1(s)X_p(s)}$, the elements
$R(s),R'(s)$ belong to  $r$.

\begin{proposition}\label{pr:existencia}
Let $S=\langle a_1,\dots, a_p\rangle$ be a numerical monoid.
There exists $N_S\in\N$ such that $\overline{R(N_S)R'(N_S)}\subset  \Q^p_{\geq}$ and such that for every  $X\in\overline{R(N_S)R'(N_S)}$ and every $i\in\{2,\dots,p-1\}$ the element
$X+(p-2)\v {q_i}$ belongs to $\Q^p_{\geq}$.
\end{proposition}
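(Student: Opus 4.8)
The plan is to use the linear functional $\l$ as a coordinate along the line $r$ determined by $X_1(s)$ and $X_p(s)$, and thereby reduce the whole statement to two linear growth estimates in $s$. Since $R(s)$ and $R'(s)$ already lie on $r$, and since $r\cap\Q^p_{\geq}=\overline{X_1(s)X_p(s)}$ is convex, the first claim reduces to showing that \emph{both} $R(s)$ and $R'(s)$ lie on $\overline{X_1(s)X_p(s)}$ once $s$ is large. The second claim will then follow from the observation that, although the first and last coordinates vanish at the endpoints $X_1(s)$ and $X_p(s)$ of that segment, on the sub-segment $\overline{R(s)R'(s)}$ they are bounded below by quantities growing linearly in $s$, so they eventually dominate the fixed shifts produced by the vectors $\v{q_i}$.

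First I would record that $\l$ restricted to $r$ is an affine isomorphism onto $\Q$: the direction vector $\v{X_1(s)X_p(s)}$ has $\l$-value $s/a_p-s/a_1\neq 0$, and since $\l(X_1(s))>\l(X_p(s))$ this parametrization carries $\overline{X_1(s)X_p(s)}$ monotonically onto the interval $[s/a_p,s/a_1]$. Hence a point of $r$ lies in $\overline{X_1(s)X_p(s)}$ exactly when its $\l$-value lies in $[s/a_p,s/a_1]$. Using $\l(P_i(s))=s/a_i$ and $\l(\v h)=d$, we get $\l(R(s))=s/a_2+d$ and $\l(R'(s))=s/a_{p-1}-d$. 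Now $s/a_p\leq s/a_2\leq\l(R(s))$ holds always, while $\l(R(s))\leq s/a_1$ as soon as $s\geq d\,a_1a_2/(a_2-a_1)$; symmetrically $\l(R'(s))\leq s/a_{p-1}\leq s/a_1$ holds always, while $s/a_p\leq\l(R'(s))$ as soon as $s\geq d\,a_{p-1}a_p/(a_p-a_{p-1})$. For $s$ above both thresholds we obtain $R(s),R'(s)\in\overline{X_1(s)X_p(s)}$, and convexity of $\overline{X_1(s)X_p(s)}=r\cap\Q^p_{\geq}$ gives $\overline{R(s)R'(s)}\subset\overline{X_1(s)X_p(s)}\subset\Q^p_{\geq}$.

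For the second assertion, note that $r$ lies in the coordinate plane through $X_1(s)$ spanned by $e_1,e_p$, so each $X\in\overline{R(s)R'(s)}$ has the form $X=x_1e_1+x_pe_p$ with $x_1,x_p\geq 0$ (by the first part); moreover $r\subset\H_s$, whence $a_1x_1+a_px_p=s$, and $\l(X)=x_1+x_p$. Solving these two linear relations gives $x_1=(a_p\l(X)-s)/(a_p-a_1)$ and $x_p=(s-a_1\l(X))/(a_p-a_1)$, each affine and monotone in $\l(X)$; and since $\l$ is affine on $\overline{R(s)R'(s)}$, as $X$ runs over that segment $\l(X)$ runs over $[s/a_{p-1}-d,\ s/a_2+d]$ (the ordering being valid because $a_2\leq a_{p-1}$). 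Combining, one obtains
\[
x_1\ \geq\ \frac{s(a_p-a_{p-1})/a_{p-1}-a_pd}{a_p-a_1},
\qquad
x_p\ \geq\ \frac{s(a_2-a_1)/a_2-a_1d}{a_p-a_1},
\]
whose right-hand sides have positive coefficient of $s$ and hence tend to $+\infty$. On the other hand, writing $g_i$ for the $\gcd$ appearing in the definition of $\v{q_i}$, the vector $X+(p-2)\v{q_i}$ has $i$th coordinate $(p-2)(a_p-a_1)/g_i>0$, vanishes in every coordinate other than the first, the $i$th and the last, and has first coordinate $x_1-(p-2)(a_p-a_i)/g_i$ and last coordinate $x_p-(p-2)(a_i-a_1)/g_i$. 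So it suffices to choose $N_S$ large enough that the two displayed lower bounds both exceed the single constant $(p-2)\max_{2\leq i\leq p-1}\max\{(a_p-a_i)/g_i,\ (a_i-a_1)/g_i\}$, in addition to being above the two thresholds of the previous paragraph; any integer beyond these finitely many values works, which proves the proposition.

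The one genuine point is the growth estimate in the third paragraph: the crude bound obtained from the full segment $\overline{X_1(s)X_p(s)}$ only gives $x_1,x_p\geq 0$, which is useless for keeping the translates $X+(p-2)\v{q_i}$ in $\Q^p_{\geq}$, so one must exploit that $R(s)$ and $R'(s)$ stay a fixed fraction away from the endpoints $X_1(s)$ and $X_p(s)$ — equivalently, that $\l(R(s))$ stays below $s/a_1$ and $\l(R'(s))$ stays above $s/a_p$ by amounts linear in $s$ — in order to force the sub-segment to sit well inside the positive orthant. Everything else is the affine bookkeeping through $\l$ sketched above, and all the thresholds it produces are explicit, which is presumably what Section~\ref{sec:formulas} will consolidate into a closed formula for $N_S$.
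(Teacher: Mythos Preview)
Your proof is correct. The organization differs mildly from the paper's: where you parametrize the whole segment $\overline{R(s)R'(s)}$ by the functional $\l$ and extract uniform lower bounds on the first and last coordinates of every $X$ in that segment, the paper instead checks only the two endpoints --- it verifies directly that $R(s)$, $R'(s)$, $R(s)+(p-2)\v{q_i}$ and $R'(s)+(p-2)\v{q_i}$ all lie in $\Q^p_{\geq}$ for large $s$, and then applies convexity of $\Q^p_{\geq}$ to the \emph{translated} segment $\overline{(R(s)+(p-2)\v{q_i})(R'(s)+(p-2)\v{q_i})}$ to cover all intermediate $X$. Your $\l$-parametrization is a clean way to see why the segment sits a linear amount inside the orthant, and it makes the ``genuine point'' you flag at the end transparent; the paper's endpoint approach, on the other hand, feeds directly into the explicit formulas of Section~\ref{sec:formulas}, since there one simply solves $(R(s)+(p-2)\v{q_i})_p=0$ and $(R'(s)+(p-2)\v{q_i})_1=0$ for $s$ without needing the intermediate uniform estimate. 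Either way the underlying computation is the same linear-growth check.
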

\begin{proof}
We have $R(s)=\frac{s(a_2-a_p)}{a_2(a_1-a_p)}e_1+
\frac{s(a_1-a_2)}{a_2(a_1-a_p)}e_p+ \frac{d}{a_p-a_1}(a_pe_1-a_1e_p)$ where   $\frac{s(a_2-a_p)}{a_2(a_1-a_p)}>0$ and $\frac{s(a_1-a_2)}{a_2(a_1-a_p)}>0$ for every $s\in\N\setminus\{0\}$ (recall that $a_1<\dots<a_p$).
 This implies that
there exists $\hat s_1$ such that for all $s\geq \hat s_1$ we have
$\frac{s(a_1-a_2)}{a_2(a_1-a_p)}>\frac{d}{a_p-a_1}a_1$, and thus
$R(s)\in\Q^p_{\geq}$. Similarly, it can be easily proven that  there exists $\hat s_2$ such that
$R'(s)\in\Q^p_{\geq}$ for all $s\geq \hat s_2$. Since
 $\Q^p_{\geq}$ is convex, it follows that
$\overline{R(s)R'(s)}\subset \Q^p_{\geq}$ for all $s\geq \max\{\hat s_1,\hat s_2\}$.

For every $s$,
\[\begin{multlined}
R(s)+(p-2)\v {q_i}=P_2(s)+\v h+(p-2)\v {q_i}\\
=
\frac{s(a_2-a_p)}{a_2(a_1-a_p)}e_1+\frac{s(a_1-a_2)}{a_2(a_1-a_p)}e_p+
\frac{d}{a_p-a_1}(a_p e_1-a_1e_p)\\
+\frac {(p-2)} {\gcd(a_i-a_p,a_p-a_1,a_1-a_i)} ((a_i-a_p)e_1+(a_p-a_1)e_i+(a_1-a_i)e_p)\\
=\left(\frac{s(a_2-a_p)}{a_2(a_1-a_p)}+\frac{da_p}{a_p-a_1}+\frac {(p-2)(a_i-a_p)} {\gcd(a_i-a_p,a_p-a_1,a_1-a_i)}\right)e_1\\
+\left(\frac {(p-2)(a_p-a_1)} {\gcd(a_i-a_p,a_p-a_1,a_1-a_i)}\right)e_i\\
+\left(\frac{s(a_1-a_2)}{a_2(a_1-a_p)}-\frac{da_1}{a_p-a_1}+\frac {(p-2)(a_1-a_i)} {\gcd(a_i-a_p,a_p-a_1,a_1-a_i)}\right)e_p.
\end{multlined}\]
Since $a_1<\dots<a_p$,
the $i$th coordinate of $R(s)+(p-2)\v {q_i}$ belongs to $\Q_{\geq}$.
Furthermore, for every $s\in\N\setminus\{0\}$ we have $\frac{s(a_2-a_p)}{a_2(a_1-a_p)}>0$ and $\frac{s(a_1-a_2)}{a_2(a_1-a_p)}>0$.
Hence,  there exists $s'\in\N$ such that for every $s\geq s'$, $R(s)+(p-2)\v {q_i}\in\Q^p_{\geq}$.
Similarly, it must hold that there exists $s''\in\N$ verifying that $R'(s)+(p-2)\v {q_i}\in\Q^p_{\geq}$ for all $s\geq s''$.
We take $N_S=\max\{s',s'',\hat s_1,\hat s_2\}$. Clearly, the elements
 $R(N_S)$ and $R'(N_S)$ are in $\Q^p_{\geq}$, and for all $i=2,\dots,p-1$,
$R(N_S)+(p-2)\v {q_i},R'(N_S)+(p-2)\v {q_i}\in\Q^p_{\geq}$. Furthermore, for every $X\in \overline{R(N_S)R'(N_S)}$, the element $X+(p-2)\v {q_i}$ belongs to the line segment with endpoints $R(N_S)+(p-2)\v {q_i}\in\Q^p_{\geq}$ and $R'(N_S)+(p-2)\v {q_i}\in\Q^p_{\geq}$. Using again that $\Q^p_{\geq}$ is convex, we obtain that   $X+(p-2)\v {q_i}$ is also in $\Q^p_{\geq}$ for all $i=2,\dots,p-1$.
\end{proof}

The numbers $N_S$ fulfilling the conditions of the above proposition are not unique.
We now define new elements that depend on the election of $N_S$.
For the sake of simplicity in our notation, in the sequel,
we will assume that for a given monoid $S$
the natural number $N_S$  represents an arbitrary fixed element of $S$ fulfilling the conditions of Proposition \ref{pr:existencia}.

\begin{definition}
Let $S=\langle a_1,\dots, a_p\rangle$ be a numerical monoid.
Define the vectors $\v {w}=P_2(N_S)-X_1(N_S)$ and
$\v w'=P_{p-1}(N_S)-X_p(N_S)$.
Note that $\l(\v w)=N_S/a_2-N_S/a_1<0$ and $\l(\v w')=N_S/a_{p-1}-N_S/a_p>0$.
\end{definition}

\begin{lemma}\label{ceros1}
Let $S=\langle a_1,\dots, a_p\rangle$ be a numerical monoid and
let $s\in \N$ be such that $s\geq N_S$. Then
$
\l(X_2(s))\leq \l(X_1(s)+\v w)$ and
$\l(X_p(s)+\v w')\leq \l(X_{p-1}(s))
$.
\end{lemma}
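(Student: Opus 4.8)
The plan is to unwind both inequalities with the linearity of $\l$ and reduce them to the elementary fact that $t\mapsto 1/t$ is strictly decreasing on the positive integers; the hypothesis $s\ge N_S$ then supplies the sign that makes everything work.

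First I would treat the inequality $\l(X_2(s))\le \l(X_1(s)+\v w)$. By linearity of $\l$ and the definition $\v w=P_2(N_S)-X_1(N_S)$, together with the already recorded value $\l(\v w)=N_S/a_2-N_S/a_1$, we have $\l(X_1(s)+\v w)=\l(X_1(s))+\l(\v w)=\tfrac{s}{a_1}+\tfrac{N_S}{a_2}-\tfrac{N_S}{a_1}$. Since $\l(X_2(s))=s/a_2$, the desired inequality is equivalent to $\tfrac{s}{a_2}-\tfrac{s}{a_1}\le \tfrac{N_S}{a_2}-\tfrac{N_S}{a_1}$, that is, to $(s-N_S)\bigl(\tfrac{1}{a_2}-\tfrac{1}{a_1}\bigr)\le 0$. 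Now $a_1<a_2$ forces $\tfrac1{a_2}-\tfrac1{a_1}<0$, while $s\ge N_S$ gives $s-N_S\ge 0$, so the product is nonpositive, which is exactly the claim.

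The inequality $\l(X_p(s)+\v w')\le \l(X_{p-1}(s))$ is handled symmetrically. Using $\l(\v w')=N_S/a_{p-1}-N_S/a_p$ and linearity, $\l(X_p(s)+\v w')=\tfrac{s}{a_p}+\tfrac{N_S}{a_{p-1}}-\tfrac{N_S}{a_p}$, and $\l(X_{p-1}(s))=s/a_{p-1}$, so the claim reduces to $(s-N_S)\bigl(\tfrac1{a_p}-\tfrac1{a_{p-1}}\bigr)\le 0$, which again holds because $a_{p-1}<a_p$ and $s-N_S\ge 0$.

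There is no genuine obstacle here: the statement is a one-line computation once the lengths of $X_i(s)$, $\v w$ and $\v w'$ are available. The only thing to be careful about is bookkeeping the signs of $\l(\v w)<0$ and $\l(\v w')>0$, but these are already pinned down in the definitions of $\v w$ and $\v w'$, so the reductions above are immediate.
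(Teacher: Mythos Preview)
Your proof is correct and matches the paper's argument essentially line for line: both reduce the inequalities to $s/a_1 - s/a_2 \ge N_S/a_1 - N_S/a_2$ (and the analogous inequality with $a_{p-1},a_p$), which follows from $s\ge N_S$ and $a_1<a_2$. The only cosmetic difference is that the paper multiplies $s\ge N_S$ by $(a_2-a_1)/(a_1a_2)$, whereas you factor as $(s-N_S)(1/a_2-1/a_1)\le 0$.
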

\begin{proof}
Since $s\geq N_S$, we have $s(a_2-a_1)\geq N_S(a_2-a_1)$, and therefore
$s(a_2-a_1)/(a_1a_2)\geq N_S(a_2-a_1)/(a_1a_2)$. Thus,
$s/a_1-s/a_2\geq N_S/a_1-N_S/a_2$, which implies that
$\l(X_1(s)+\v w)=s/a_1+N_S/a_2-N_S/a_1\geq s/a_2=\l(X_2(s))$.
Similarly, we obtain the other inequality.
\end{proof}

The following result  generalizes Proposition \ref{pr:existencia}.

\begin{proposition}\label{pr:p}
Let $S=\langle a_1,\dots, a_p\rangle$ be a numerical monoid and
let $s\in\N$ be such that $s\geq N_S$. Then,
every element $X$ of $\overline{(X_1(s)+\v w+\v h) (X_p(s)+\v w'-\v h)}$  verifies
$X\in\Q^p_{\geq}$ and $X+(p-2)\v {q_i}\in\Q^p_{\geq}$ for all $i\in\{2,\dots,p-1\}$.
\end{proposition}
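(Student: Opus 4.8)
The plan is to reduce Proposition \ref{pr:p} to Proposition \ref{pr:existencia} by recognizing that the two endpoints of the segment $\overline{(X_1(s)+\v w+\v h)(X_p(s)+\v w'-\v h)}$ are nothing but $R(N_S)$ and $R'(N_S)$ pushed out along the coordinate axes $e_1$ and $e_p$, and then invoking convexity of $\Q^p_{\geq}$.

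First I would record the two identities
\[
X_1(s)+\v w+\v h=\tfrac{s-N_S}{a_1}\,e_1+R(N_S),\qquad
X_p(s)+\v w'-\v h=\tfrac{s-N_S}{a_p}\,e_p+R'(N_S).
\]
Both follow directly from the definitions: $\v w=P_2(N_S)-X_1(N_S)$ and $\v w'=P_{p-1}(N_S)-X_p(N_S)$, $R(N_S)=P_2(N_S)+\v h$ and $R'(N_S)=P_{p-1}(N_S)-\v h$, together with $X_1(s)-X_1(N_S)=\frac{s-N_S}{a_1}e_1$ and $X_p(s)-X_p(N_S)=\frac{s-N_S}{a_p}e_p$. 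In particular, for $s=N_S$ these endpoints are exactly $R(N_S)$ and $R'(N_S)$, which explains the remark that this statement generalizes Proposition \ref{pr:existencia}.

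Next, since $s\geq N_S$ the vectors $\frac{s-N_S}{a_1}e_1$ and $\frac{s-N_S}{a_p}e_p$ are nonnegative multiples of basis vectors, hence lie in $\Q^p_{\geq}$. By Proposition \ref{pr:existencia} the points $R(N_S)$ and $R'(N_S)$ lie in $\Q^p_{\geq}$, and applying that proposition to the endpoints $R(N_S),R'(N_S)\in\overline{R(N_S)R'(N_S)}$, so do $R(N_S)+(p-2)\v{q_i}$ and $R'(N_S)+(p-2)\v{q_i}$ for every $i\in\{2,\dots,p-1\}$. Adding a point of $\Q^p_{\geq}$ to a point of $\Q^p_{\geq}$ stays in $\Q^p_{\geq}$, so from the displayed identities all four points $X_1(s)+\v w+\v h$, $X_p(s)+\v w'-\v h$, $X_1(s)+\v w+\v h+(p-2)\v{q_i}$ and $X_p(s)+\v w'-\v h+(p-2)\v{q_i}$ belong to $\Q^p_{\geq}$.

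Finally I would finish with convexity of $\Q^p_{\geq}$: any $X$ in $\overline{(X_1(s)+\v w+\v h)(X_p(s)+\v w'-\v h)}$ is a convex combination $\lambda(X_1(s)+\v w+\v h)+(1-\lambda)(X_p(s)+\v w'-\v h)$ with $0\le\lambda\le 1$, hence $X\in\Q^p_{\geq}$; and the same convex combination of the two translated endpoints equals $X+(p-2)\v{q_i}$, so this too lies in $\Q^p_{\geq}$, for each $i\in\{2,\dots,p-1\}$. I do not expect a genuine obstacle here: the only thing requiring care is the bookkeeping that yields the two displayed identities, after which the result is immediate from Proposition \ref{pr:existencia} and the convexity of the nonnegative orthant. (Note that Lemma \ref{ceros1} is not needed for this proposition; it is recorded for later use.)
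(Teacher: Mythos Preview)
Your proof is correct and follows essentially the same route as the paper's: you establish the key identities $X_1(s)+\v w+\v h=\frac{s-N_S}{a_1}e_1+R(N_S)$ and $X_p(s)+\v w'-\v h=\frac{s-N_S}{a_p}e_p+R'(N_S)$, invoke Proposition~\ref{pr:existencia} to place $R(N_S)$, $R'(N_S)$ and their $(p-2)\v{q_i}$-translates in $\Q^p_{\geq}$, add the nonnegative axis shifts, and close with convexity. Your remark that Lemma~\ref{ceros1} is not needed here is also consistent with the paper.
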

\begin{proof}
The element $X_1(s)+\v w+\v h$ is equal to
$\frac {s-N_S} {a_1} e_1 +R(N_S)$. By Proposition \ref{pr:existencia}, the element $R(N_S)$ belongs to  $\Q^p_{\geq}$. Thus,  for every $s\geq N_S$ the element  $X_1(s)+\v w+\v h$ is also in  $\Q^p_{\geq}$. Similarly, it must hold  $X_p(s)+\v w'-\v h\in \Q^p_{\geq}$ for every $s\geq N_S$.
Thus,
$\overline{(X_1(s)+\v w+\v h) (X_p(s)+\v w'-\v h)}\subset \Q^p_{\geq}$.

For every $i\in\{2,\dots,p-1\}$,  the element
$X_1(s)+\v w+\v h+(p-2)\v {q_i}$ is equal to $\frac{s-N_S} {a_1}e_1 + R(N_S)+(p-2)\v {q_i}$.
By Proposition \ref{pr:existencia},  $R(N_S)+(p-2)\v {q_i}$ is in $\Q^p_{\geq}$. Thus for every $s\geq N_S$, we have $X_1(s)+\v w+\v h+(p-2)\v {q_i}\in \Q^p_{\geq}$.
Similarly, it can be obtained that $X_p(s)+\v w'-\v h+(p-2)\v {q_i}\in\Q^p_{\geq}$.
For every $X\in \overline{(X_1(s)+\v w+\v h)(X_p(s)+\v w'-\v h)}$, the element $X+(p-2)\v {q_i}$ belongs to the line segment with endpoints $(X_1(s)+\v w+\v h)+(p-2)\v {q_i}\in\Q^p_{\geq}$ and $(X_p(s)+\v w'-\v h)+(p-2)\v {q_i}\in\Q^p_{\geq}$. Hence, $X+(p-2)\v {q_i}\in\Q^p_{\geq}$.
\end{proof}

\begin{corollary}\label{cuadrados}
Let  $s\in\N$ be such that $s\geq N_S$. For every
$X\in \overline{(X_1(s)+\v w+\v h) (X_p(s)+\v w'-\v h)}$, the set
$\{X+\sum_{i=2}^{p-1}\lambda_i \v {q_i}\mid 0\leq\lambda_i\leq 1~
,\lambda_i\in\Q \}$ is contained in $ \Q^p_{\geq}$.
\end{corollary}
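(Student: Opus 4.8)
\emph{Proof proposal.} The plan is to reduce the statement to the convexity of $\Q^p_{\geq}$ together with Proposition \ref{pr:p}. The observation is that, although the set $\{X+\sum_{i=2}^{p-1}\lambda_i\v{q_i}\mid 0\leq\lambda_i\leq 1,~\lambda_i\in\Q\}$ is a zonotope and not a simplex, \emph{each} of its points is a convex combination of the $p-1$ points $X$ and $X+(p-2)\v{q_2},\dots,X+(p-2)\v{q_{p-1}}$, and all of these points already lie in $\Q^p_{\geq}$ by Proposition \ref{pr:p} (applied to $X$ and to the vectors $X+(p-2)\v{q_i}$, which is precisely what that proposition asserts for $s\geq N_S$). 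Pointwise membership in $\Q^p_{\geq}$ is all the corollary requires.

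Concretely I would argue as follows. Fix $\lambda_2,\dots,\lambda_{p-1}\in\Q\cap[0,1]$ and set $\mu_i=\lambda_i/(p-2)$ for $i=2,\dots,p-1$ and $\mu_1=1-\sum_{i=2}^{p-1}\mu_i$. Because $p\geq 3$ (so $p-2\geq 1$) and $0\leq\lambda_i\leq 1$, each $\mu_i\geq 0$ for $i\geq 2$ and $\sum_{i=2}^{p-1}\mu_i\leq\frac{p-2}{p-2}=1$, hence $\mu_1\geq 0$ as well, while $\sum_{i=1}^{p-1}\mu_i=1$. A one-line expansion then gives
\[\begin{multlined}
\mu_1 X+\sum_{i=2}^{p-1}\mu_i\bigl(X+(p-2)\v{q_i}\bigr)
=\Bigl(\mu_1+\sum_{i=2}^{p-1}\mu_i\Bigr)X+\sum_{i=2}^{p-1}(p-2)\mu_i\,\v{q_i}\\
=X+\sum_{i=2}^{p-1}\lambda_i\v{q_i},
\end{multlined}\]
exhibiting the desired convex combination. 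Since $\Q^p_{\geq}$ is convex and contains $X$ and every $X+(p-2)\v{q_i}$ by Proposition \ref{pr:p}, it contains $X+\sum_{i=2}^{p-1}\lambda_i\v{q_i}$; letting $(\lambda_2,\dots,\lambda_{p-1})$ range over all of $(\Q\cap[0,1])^{p-2}$ yields the full set in the statement.

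I do not anticipate a genuine obstacle: the content is entirely the choice of the convex weights $\mu_i$, and the only hypotheses used are $p\geq 3$ (to divide by $p-2$ and to have a nonempty family $\v{q_2},\dots,\v{q_{p-1}}$) and $s\geq N_S$ (so that Proposition \ref{pr:p} applies to the endpoints of the segment $\overline{(X_1(s)+\v w+\v h)(X_p(s)+\v w'-\v h)}$). Everything stays in $\Q^p$ because the $\v{q_i}$ are integer vectors and $X$ is rational, so the rationality constraint on the $\lambda_i$ is harmless.
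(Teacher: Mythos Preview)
Your proposal is correct and follows essentially the same route as the paper: both arguments observe that each point $X+\sum_{i=2}^{p-1}\lambda_i\v{q_i}$ with $0\leq\lambda_i\leq 1$ lies in the simplex with vertices $X,\,X+(p-2)\v{q_2},\dots,X+(p-2)\v{q_{p-1}}$, invoke Proposition~\ref{pr:p} to place those vertices in $\Q^p_{\geq}$, and conclude by convexity. The only cosmetic difference is that the paper describes the simplex set-theoretically and then substitutes $\lambda_i=(p-2)\mu_i$, whereas you write down the convex weights $\mu_i=\lambda_i/(p-2)$ and $\mu_1=1-\sum_{i\geq 2}\mu_i$ directly; the content is identical.
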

\begin{proof}
By Proposition \ref{pr:p}, the elements $X,X+(p-2)\v {q_2},\dots,X+(p-2)\v q_{p-1}$ are in $\Q^p_{\geq}. $
The smallest convex set  with respect the inclusion that contains the elements $X,X+(p-2)\v {q_2},\dots,X+(p-2)\v q_{p-1}$ is the set
$C=\{ X +\sum_{i=2}^{p-1} \mu_i (p-2)\v {q_i} \mid  0\leq \sum_{i=2}^{p-1}\mu_i \leq 1, \mu_i \in\Q_{\geq} \}$. Since $\Q^p_{\geq}$ is convex,  the set $C$ is a subset of  $\Q^p_{\geq}$. The set $C$ is equal to
$\{X+\sum_{i=2}^{p-1}\lambda_i\v {q_i}\mid 0\leq \sum_{i=2}^{p-1}\lambda_i\leq p-2,
\lambda_i\in\Q_{\geq}\}$ (just substitute $\mu_i(p-2)$ by $\lambda_i$), which clearly contains the set
$\{X+\sum_{i=2}^{p-1}\lambda_i\v {q_i}\mid 0\leq\lambda_i\leq 1,\lambda_i\in\Q\}$.
Thus, $\{X+\sum_{i=2}^{p-1}\lambda_i\v {q_i}\mid 0\leq\lambda_i\leq 1,\lambda_i\in\Q\}\subset \Q^p_{\geq}$.
\end{proof}

In order to complete our construction, we express $\E(s)$ as a union of three different sets.

\begin{definition}\label{d:zetas}
Let $S=\langle a_1,\dots, a_p\rangle$ be a numerical monoid.
For every $s\in \N$ such that $s\geq N_S$, define
\begin{itemize}
\item $\E_1(s)$  the set of elements
$x=(x_1,\dots,x_p)\in\E(s)$ verifying that $s/a_1+\l(\v w)< \l(x)\leq s/a_1$,
\item $\E_2(s)$  the set of elements
$x=(x_1,\dots,x_p)\in\E(s)$ verifying that $s/a_p+\l(\v w')-d\leq \l(x)\leq s/a_1+\l(\v w)+d$,
\item $\E_3(s)$  the set of elements
$x=(x_1,\dots,x_p)\in\E(s)$ verifying that $s/a_p\leq \l(x)< s/a_p+\l(\v w')$.
\end{itemize}

\end{definition}

For every $x=(x_1,\dots,x_p)\in\E(s)$ we have
$s=x_1a_1+\dots+x_pa_p$. This implies
$\frac s {a_1}=\frac 1 {a_1} (x_1a_1+\dots+x_pa_p)$, and using  $a_1<\dots<a_p$, we obtain
$\frac s {a_1}=x_1+x_2\frac {a_2}{a_1}+\dots+x_p\frac {a_p}{a_1}\geq x_1+\dots+x_p$.
Similarly, it can be proved that $\frac s {a_p}\leq x_1+\dots+x_p$.
Since $\l(x)=x_1+\dots+x_p$,
we obtain that $\frac s {a_1}=\l(X_1(s))\geq \l(x)\geq \l(X_p(s))=\frac s {a_p}$, and
hence $\E(s)=\E_1(s)\cup\E_2(s)\cup\E_3(s)$.

In the sequel, we use these sets to prove the periodicity of $\Delta(S)$, and to improve the algorithmic method that computes it.

Denote $\Delta(\E_i(s))=\{l_{j+1}-l_j\mid j=1,\dots,k-1\}$ with
$\{l_1<\dots<l_k\}$ the ordered set obtained from $\{\l(x)\mid x\in\E_i(s)\}
$ and  by $\lfloor x \rfloor$ the largest integer not greater than $x$.

\begin{theorem}\label{pr:d}
Let $s\in\N$ be such that $s\geq N_S$. Then
$\Delta(\E_2(s))=\{d\}$.
\end{theorem}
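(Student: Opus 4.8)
The goal is to show that the length set $A:=\{\l(x)\mid x\in\E_2(s)\}$ is a full arithmetic progression with common difference exactly $d$ and at least two terms. The only arithmetic input is Lemma~\ref{lema:d}: $d\mid\l(m)$ for every $m\in M$, so any two factorizations of $s$ have lengths congruent modulo $d$; call $\rho$ this common residue. Hence $A\subseteq B:=[\,s/a_p+\l(\v w')-d,\ s/a_1+\l(\v w)+d\,]\cap(\rho+d\Z)$, the interval being the length window defining $\E_2(s)$ in Definition~\ref{d:zetas}; thus every element of $\Delta(\E_2(s))$ is a positive multiple of $d$, and it suffices to prove $A=B$. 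A short computation from the definitions of $\v w,\v w'$ shows the interval in $B$ has length $\ge 2d$ whenever $s\ge N_S$, so $|B|\ge 2$.

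The heart of the proof is a one--step lemma: \emph{if $x\in\E_2(s)$ and $\l(x)\le s/a_1+\l(\v w)$, then some $y\in\E_2(s)$ satisfies $\l(y)=\l(x)+d$.} Set $\ell_1:=\l(x)+d$; since $x\in\E_2(s)$, $\ell_1$ lies in the length window, which is precisely the range of $\l$ on the segment $T:=\overline{(X_1(s)+\v w+\v h) (X_p(s)+\v w'-\v h)}$ (its endpoints have $\l$-values $s/a_1+\l(\v w)+d$ and $s/a_p+\l(\v w')-d$, and $\l$ is affine and non-constant along $T$), so there is a unique $Y\in T$ with $\l(Y)=\ell_1$. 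Pick a minimum length increase vector $\v v\in M\cap\Z^p$ (Definition~\ref{d:v}); then $x+\v v\in\Z^p$ is again a factorization of $s$ with $\l(x+\v v)=\ell_1$. As $x+\v v$ and $Y$ both lie in $\H_s$ with equal $\l$-value, $x+\v v-Y\in\H_0\cap\V(E)$, so by Remark~\ref{r:base} one may write $x+\v v-Y=\sum_{i=2}^{p-1}c_i\v{q_i}$ with $c_i\in\Q$. Put $y:=x+\v v-\sum_{i=2}^{p-1}\lfloor c_i\rfloor\v{q_i}$: since each $\v{q_i}\in M\cap E$, the vector $y$ is integral, factors $s$, and has $\l(y)=\ell_1$; and $y=Y+\sum_{i=2}^{p-1}(c_i-\lfloor c_i\rfloor)\v{q_i}$ with every coefficient in $[0,1)$, so Corollary~\ref{cuadrados} applied at $Y\in T$ gives $y\in\Q^p_{\geq}$, whence $y\in\N^p$ and $y\in\E(s)$. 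Since $\l(y)=\ell_1\in B$, we get $y\in\E_2(s)$. The mirror lemma --- if $x\in\E_2(s)$ and $\l(x)\ge s/a_p+\l(\v w')$ then some $y\in\E_2(s)$ has $\l(y)=\l(x)-d$ --- is proved the same way with $\v v$ replaced by $-\v v$ and $X_1(s)+\v w+\v h$ by $X_p(s)+\v w'-\v h$ (equivalently, by the symmetry $a_1\leftrightarrow a_p$, $a_2\leftrightarrow a_{p-1}$, $\v w\leftrightarrow\v w'$).

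Granting the two one-step lemmas, $A=B$ follows by a routine fill-in: $\max A=\max B$ (else $\max A\le\max B-d\le s/a_1+\l(\v w)$ and the upward lemma produces a longer element of $A$), symmetrically $\min A=\min B$, and no $\ell\in B$ with $\min A\le\ell<\max A$ can be missing from $A$ (apply the upward lemma to the largest element of $A$ that is $\le\ell$, which is then $\le\ell-d\le s/a_1+\l(\v w)$). This requires $A\ne\emptyset$, i.e. $\E_2(s)\ne\emptyset$: as $s\ge N_S$ with $N_S\in S$ (take $N_S>\F(S)$ if needed) we have $\E(s)\ne\emptyset$, and from any $x_0\in\E(s)$ one moves its length into the window by successively applying relations $a_je_1-a_1e_j\in M$ (each such move being available while the length is still outside the window), the window being wide enough for $s\ge N_S$ that the length cannot step over it. I expect the genuinely delicate point to be the positivity step inside the one-step lemma --- namely, that the rounded vector $y=Y+\sum(c_i-\lfloor c_i\rfloor)\v{q_i}$ has no negative coordinate; this is exactly what Proposition~\ref{pr:p} and Corollary~\ref{cuadrados} (together with the choice of $N_S$ in Proposition~\ref{pr:existencia}) were built to supply, while the residue bookkeeping, the width estimate, and the nonemptiness argument are routine once $s\ge N_S$.
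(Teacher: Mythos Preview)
Your argument is essentially the paper's own: the same segment $T=\overline{(X_1(s)+\v w+\v h)(X_p(s)+\v w'-\v h)}$, the same minimum length increase vector $\v v$, the same basis $\{\v{q_2},\dots,\v{q_{p-1}}\}$ of $\H_0\cap\V(E)$, and the same ``project onto $T$, then subtract integer parts of the $\v{q_i}$--coefficients and invoke Corollary~\ref{cuadrados}'' trick to land in $\N^p$. The only organizational difference is that you package this as a one-step lemma $\l(x)\mapsto\l(x)+d$ starting from an element $x\in\E_2(s)$, whereas the paper starts from a single $\gamma\in\Z^p\cap\H_s$ obtained by B\'ezout (using $\gcd(a_1,\dots,a_p)=1$), shifts it by multiples of $\v v$ to hit each target length in the window, and then rounds.

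That organizational difference matters in exactly one place: your nonemptiness step for $\E_2(s)$. The claim that from any $x_0\in\E(s)$ one can ``move the length into the window by successively applying $a_je_1-a_1e_j$'' is not justified: applying $\pm(a_je_1-a_1e_j)$ while staying in $\N^p$ requires $(x_0)_1\ge a_j$ or $(x_0)_j\ge a_1$, and nothing about being outside the window forces such a coordinate to be large enough; moreover each such move changes the length by $a_j-a_1$, which can exceed the window width, so ``cannot step over it'' is not clear either. The clean fix is to notice that your own one-step lemma never uses $x\in\N^p$: it only needs $x\in\Z^p\cap\H_s$ with $\l(x)+d$ in the window. So take any $\gamma\in\Z^p$ with $\sum\gamma_ia_i=s$ (primitivity), choose $\xi\in\Z$ with $\l(\gamma+\xi\v v)\in B$, and run your rounding argument once to produce a first element of $\E_2(s)$. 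This is precisely how the paper initializes its construction, and it removes the only gap in your proof.
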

\begin{proof}
Take $s\in\N$ such that $s\geq N_S$.
Consider the set $K=\{X_p(s)+\v w'-\v h, X_p(s)+\v w', X_p(s)+\v w' +\v h, \dots,X_p(s)+\v w'+k\v h, X_1(s)+\v w+\v h\}$ with $k\in\N$ the maximum such that $\l(X_p(s)+\v w'+k\v h)\leq\l( X_1(s)+\v w+\v h)$. Denote by $\{l_0,l_1,\dots,l_{k+1},l_{k+2}\}$ the set $\l(K)$ where $l_i=\l(X_p(s)+\v w'+(i-1)\v h)$ with $i=0,\dots,k+1$ and $l_{k+2}=\l(X_1(s)+\v w+\v h)$.
 We have $l_{i}-l_{i-1}=d$ (therefore $l_i=l_0+id$)  for every $i=1,\dots,k+1$, and $l_{k+2}-l_{k+1}< d$. Since $\gcd(a_1,\dots,a_p)=1$, there exists $\gamma=(\gamma_1,\dots,\gamma_p)\in\Z^p$ such that $\sum_{i=1}^p\gamma_ia_i=s$, and thus $\gamma\in\H_s$.
 Let $\v v$ be a minimal length increase vector.
  We can find $\xi\in\Z$ such that $l_0\leq \l(\gamma+\xi\v v)=\l(\gamma)+\xi d<l_0+d=l_1$; since $\v v\in M$ the element $\gamma+\xi\v v$ is again in $\H_s$.
From the linearity of $\l$ and the fact that
$l_0=\l(X_p(s)+\v w'-\v h)\leq\l(\gamma+\xi\v v)<l_1=\l(X_p(s)+\v w')$, we can assert that there exists $\gamma^0\in \overline{(X_p(s)+\v w'-\v h)(X_p(s)+\v w')}\subset \H_s$ such that $\l(\gamma^0)=\l(\gamma+\xi\v v)$ (just solve the linear equation $\l(X_p(s)+\v w' - \lambda \v h)=\l(\gamma+\xi \v v)$ on $\lambda$ and take $\gamma^0=X_p(s)+\v w' -\lambda \v h$).
Since $\gamma+\xi\v v,\gamma^0\in\H_s$ and $\l(\gamma+\xi\v v)=\l(\gamma^0)$, the element $\gamma+\xi\v v-\gamma^0$ belongs to $\H_0\cap\V(E)$.
The set $\{\v {q_2},\dots,\v q_{p-1}\}$ is a basis of $\H_0\cap \V(E)$
 (see Remark \ref{r:base}), this implies that  there exist $\lambda_2,\dots,\lambda_{p-1}\in \Q$ such that $\gamma+\xi\v v-\gamma^0=\sum_{j=2}^{p-1}\lambda_j\v {q_j}$.
This leads to
$\gamma+\xi\v v-\sum_{j=2}^{p-1}\lfloor\lambda_j\rfloor\v {q_j}=
\gamma^0+\sum_{j=2}^{p-1}(\lambda_j-\lfloor\lambda_j\rfloor)\v {q_j}$.
The element $\gamma+\xi\v v-\sum_{j=2}^{p-1}\lfloor\lambda_j\rfloor\v {q_j}$ belongs to $\Z^p$, and
by Corollary \ref{cuadrados}  $\gamma^0+\sum_{j=2}^{p-1}(\lambda_j-\lfloor\lambda_j\rfloor)\v {q_j}\in\Q^p_{\geq}$. Thus,  $\gamma'^0=\gamma+\xi\v v-\sum_{j=2}^{p-1}\lfloor\lambda_j\rfloor\v {q_j}$ belongs to $\N^p$ and therefore $\gamma'^0\in \E(s)$.
Using that $\l(\v q_i)=0$, we obtain that  $\gamma'^0$ satisfies $l_0\leq \l(\gamma'^0)=\l(\gamma+\xi\v v-\sum_{j=2}^{p-1}\lfloor\lambda_j\rfloor\v {q_j})=\l(\gamma+\xi\v v)<l_0+d=l_1$. Now,
for every $i=1,\dots, k$, consider  the elements $\gamma+\xi \v v+i\v v$; they  verify that $l_i\leq \l(\gamma+\xi \v v+i\v v)=\l(\gamma+\xi \v v)+id<l_{i+1}$. If we proceed similarly, we obtain $\gamma'^i\in\E(s)$ fulfilling that $l_i\leq \l(\gamma'^i )=\l(\gamma+\xi\v v)+id<l_{i+1}$.
In this way we get a sequence of elements $\gamma'^0,\dots, \gamma'^k$ with lengths equal to $\l(\gamma'^0),\l(\gamma'^0)+d,\dots,\l(\gamma'^0)+kd$, respectively.
If $l_{k+1}=l_{k+2}$, then $X_p(s)+\v w'+k\v h= X_1(s)+\v w+\v h$ and we only have to check if $\l(\gamma'^0)+(k+1)d= l_{k+1}$. If so, then there exists $\gamma'^{k+1}$ such that $\l(\gamma'^{k+1})=\l(\gamma'^0)+(k+1)d$.
If
$l_{k+1}<l_{k+2}$, we check if $\l(\gamma'^0)+(k+1)d\leq l_{k+2}$. If so, then repeating the procedure with the element $\gamma+\xi \v v + (k+1)\v v$, we get an element $\gamma'^{k+1}$ with $\l(\gamma'^{k+1})=\l(\gamma'^0)+(k+1)d$.
The sequence obtained if formed by  $k$ or $k+1$ elements $\gamma'^0,\dots,\gamma'^{r}$ whose ordered set of lengths is  $\{\l(\gamma'^0),\l(\gamma'^0)+d,\dots,\l(\gamma'^0)+rd\}$, $\l(\gamma'^0)-l_0<d$ and $l_{k+2}-(\l(\gamma'^0)+rd)<d$.
Since $d=\min(\Delta(S))$, it is not possible to find more elements having different lengths, and therefore
$\Delta(\E_2(s))=\{d\}$.
\end{proof}

\begin{corollary}\label{cor:partes}
For all integer $s\geq N_S$, $\E_1(s)\cap \E_2(s)\neq \emptyset$ and $\E_2(s)\cap \E_3(s)\neq \emptyset$. Furthermore,  $\Delta(s)=\Delta(\E_1(s))\cup\{d\}\cup \Delta(\E_3(s))$.
\end{corollary}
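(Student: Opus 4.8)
The plan is to reduce both assertions to the geometry of the length set $\L(s)=\{\l(x)\mid x\in\E(s)\}$. For $i\in\{1,2,3\}$ write $L_i=\{\l(x)\mid x\in\E_i(s)\}$, so that $\L(s)=L_1\cup L_2\cup L_3$ and $\Delta(\E_i(s))=\Delta(L_i)$, where $\Delta(L)$ denotes the set of differences of consecutive elements of a finite set $L\subset\R$. Using that every $x\in\E(s)$ satisfies $s/a_p\le\l(x)\le s/a_1$ (the paragraph following Definition~\ref{d:zetas}) and setting $t_{-}=s/a_p+\l(\v w')$, $t_{+}=s/a_1+\l(\v w)$, the defining inequalities of $\E_1(s),\E_2(s),\E_3(s)$ simplify to
\[
L_3=\{l\in\L(s)\mid l<t_{-}\},\qquad L_1=\{l\in\L(s)\mid l>t_{+}\},\qquad L_2=\L(s)\cap[t_{-}-d,\,t_{+}+d].
\]
So $L_3$ is an initial segment of $\L(s)$, $L_1$ a final segment, and $L_2$ a ``middle'' block. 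I would first record that $t_{-}\le t_{+}$ whenever $s\ge N_S$: by Lemma~\ref{ceros1}, $t_{-}=\l(X_p(s)+\v w')\le\l(X_{p-1}(s))=s/a_{p-1}\le s/a_2=\l(X_2(s))\le\l(X_1(s)+\v w)=t_{+}$ (using $a_2\le a_{p-1}$).

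For the two non-emptiness claims I would invoke the sequence constructed in the proof of Theorem~\ref{pr:d}: it yields $\gamma'^0,\dots,\gamma'^{r}\in\E(s)$ whose lengths form an arithmetic progression of step $d$, with $l_0\le\l(\gamma'^0)<l_1$ and $l_{k+2}-d<\l(\gamma'^{r})\le l_{k+2}$, where $l_0=\l(X_p(s)+\v w'-\v h)=t_{-}-d$, $l_1=\l(X_p(s)+\v w')=t_{-}$ and $l_{k+2}=\l(X_1(s)+\v w+\v h)=t_{+}+d$. Then $\l(\gamma'^0)\in[t_{-}-d,\,t_{-})$ and $\l(\gamma'^0)\ge s/a_p$, so $\gamma'^0\in\E_3(s)$; and $t_{-}-d\le\l(\gamma'^0)<t_{-}\le t_{+}+d$, so $\gamma'^0\in\E_2(s)$ as well. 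Symmetrically $\gamma'^{r}\in\E_1(s)\cap\E_2(s)$. Hence $\E_2(s)\cap\E_3(s)\ne\emptyset$ and $\E_1(s)\cap\E_2(s)\ne\emptyset$. Consequently $\alpha:=\min L_2<t_{-}$ and $\beta:=\max L_2>t_{+}$, so $\alpha\in L_3$ and $\beta\in L_1$; moreover $L_2$ has at least two elements by Theorem~\ref{pr:d}, so $\alpha<\beta$, and $L_2=\L(s)\cap[\alpha,\beta]$.

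For the identity I would describe the sorted list $\L(s)$ as three consecutive blocks. Put $A=\{l\in\L(s)\mid l<\alpha\}$ and $B=\{l\in\L(s)\mid l>\beta\}$, so that $\L(s)$ sorted is $A$, then $L_2$, then $B$. Every $l\in A$ lies in $L_3$: it is $<\alpha=\min L_2$, hence not in $L_2$, and $<\alpha<t_{-}\le t_{+}$, hence not in $L_1$. Thus $A$ is precisely the portion of $L_3$ below $\alpha$, an initial segment of $L_3$, with $\alpha$ the element of $L_3$ immediately after $\max A$; symmetrically $B$ is a final segment of $L_1$ with $\beta$ immediately before $\min B$. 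Reading off the consecutive differences of $\L(s)$: those inside $A$ together with $\alpha-\max A$ lie in $\Delta(L_3)$; those inside $L_2$ all equal $d$ by Theorem~\ref{pr:d}; and $\min B-\beta$ together with those inside $B$ lie in $\Delta(L_1)$. This gives $\Delta(s)\subseteq\Delta(\E_3(s))\cup\{d\}\cup\Delta(\E_1(s))$. The reverse inclusion is immediate since $L_3,L_1$ are an initial and a final segment of $\L(s)$ (so $\Delta(L_3),\Delta(L_1)\subseteq\Delta(s)$) and $L_2$ is a block of consecutive elements of $\L(s)$ with $\alpha<\beta$ (so $d\in\Delta(s)$); hence $\Delta(s)=\Delta(\E_1(s))\cup\{d\}\cup\Delta(\E_3(s))$.

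The step needing the most care is the last one. One must treat cleanly the degenerate cases $A=\emptyset$ (that is, $L_3\subseteq L_2$) and $B=\emptyset$, where the transition difference $\alpha-\max A$ (resp.\ $\min B-\beta$) is absent: there $\Delta(L_3)\subseteq\{d\}$ (resp.\ $\Delta(L_1)\subseteq\{d\}$) and the identity still holds. One must also check that no difference ``straddles the seams'': this is exactly why the overlaps $L_2\cap L_3\ne\emptyset$ and $L_1\cap L_2\ne\emptyset$ (equivalently $\alpha<t_{-}$ and $\beta>t_{+}$) are needed --- without, say, $L_2\cap L_3\ne\emptyset$ one would have $\max L_3<t_{-}-d$ while the successor of $\max L_3$ in $\L(s)$ is $\ge t_{-}$, producing a difference $>d$ not belonging to any $\Delta(L_i)$. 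Everything else is routine once $t_{-}\le t_{+}$ and these two overlaps are in hand.
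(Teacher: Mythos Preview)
Your argument is correct and follows essentially the same route as the paper: you invoke the construction from the proof of Theorem~\ref{pr:d} to locate elements in the two overlap strips $(t_+,t_++d]$ and $[t_--d,t_-)$, and then use the resulting block decomposition of the sorted length set to read off $\Delta(s)$. The paper phrases the overlap slightly differently---it observes that $\min L_1=\max L_2$ and $\max L_3=\min L_2$ (each overlap strip contains exactly one length, since any two distinct lengths differ by at least $d$), whence $\Delta(s)=\Delta(L_1)\cup\Delta(L_2)\cup\Delta(L_3)$ directly---whereas you keep track of $\alpha=\min L_2$ and $\beta=\max L_2$ and split $\L(s)$ as $A\cup L_2\cup B$; but the content is the same, and your version makes the seam analysis and the degenerate cases $A=\emptyset$, $B=\emptyset$ more explicit.
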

\begin{proof}
From Definition \ref{d:zetas}, $\E_1(s)\cap \E_2(s)$ is formed by the elements $x\in\E(s)$ verifying that $s/a_1+\l(\v w)<\l(x)\leq s/a_1+\l(\v w)+d$.
Taking into account that $d=(s/a_1+\l(\v w)+d)-(s/a_1+\l(\v w))$, and using the proof of Theorem \ref{pr:d}, we can assert that there exists at least an element $\gamma'$ such that $s/a_1+\l(\v w)<\l(\gamma')\leq s/a_1+\l(\v w)+d$.
This implies that
$\E_1(s)\cap \E_2(s)\neq \emptyset$ and $\min\{\l(x)\mid x\in\E_1(s)\}$ is equal to $\max\{\l(x)\mid x\in\E_2(s)\}$.
Similarly, it can be proved that
 $\E_2(s)\cap \E_3(s)\neq \emptyset$ and
 $\max\{\l(x)\mid x\in\E_3(s)\}$ is equal to $\min\{\l(x)\mid x\in\E_2(s)\}$. In this way, we obtain that
$\Delta(s)=\Delta(\E_1(s))\cup\Delta(\E_2(s))\cup \Delta(\E_3(s))$.
Since $s\geq N_S$, by Theorem \ref{pr:d}, $\Delta(\E_2(s))=\{d\}$, and thus
$\Delta(s)=\Delta(\E_1(s))\cup\{d\}\cup \Delta(\E_3(s))$.
\end{proof}

The following result gives us the key to study the periodicity of $\Delta(S)$.

\begin{theorem}\label{pr:periodo}
Let $S=\langle a_1,\dots, a_p\rangle$ be a numerical monoid and
let $s\in\N$ be such that $s\geq N_S$.
Then $\E_1(s+a_1)=\{x+e_1\mid x\in \E_1(s)\}$ and
$\E_3(s+a_p)=\{x+e_p\mid x\in \E_3(s)\}$.
\end{theorem}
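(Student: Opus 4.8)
The plan is to prove both equalities by double inclusion. Since the second assertion is the mirror image of the first under the exchange $a_1\leftrightarrow a_p$, $e_1\leftrightarrow e_p$, $\v w\leftrightarrow \v w'$ and $\E_1\leftrightarrow\E_3$, I would carry out the argument for $\E_1(s+a_1)=\{x+e_1\mid x\in\E_1(s)\}$ in full and then transcribe it for $\E_3$. Note first that $s+a_1\ge N_S$, so both $\E_1(s)$ and $\E_1(s+a_1)$ are defined.

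For the inclusion $\supseteq$: given $x\in\E_1(s)$, the translate $x+e_1$ lies in $\N^p$ and satisfies $\sum_i(x+e_1)_ia_i=s+a_1$, so $x+e_1\in\E(s+a_1)$, with $\l(x+e_1)=\l(x)+1$. The window defining $\E_1$ at the argument $s+a_1$ is $(s+a_1)/a_1+\l(\v w)<\l(\cdot)\le(s+a_1)/a_1$; since $(s+a_1)/a_1=s/a_1+1$ and $\l(\v w)$ does not depend on $s$, subtracting $1$ throughout recovers exactly the window $s/a_1+\l(\v w)<\l(\cdot)\le s/a_1$ that defines $\E_1(s)$, applied to $\l(x)$. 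Hence $x+e_1\in\E_1(s+a_1)$. The inclusion $\subseteq$ runs the same bookkeeping in reverse, provided one can \emph{peel off} an $e_1$: given $y\in\E_1(s+a_1)$, if $y_1\ge1$ then $y-e_1\in\N^p\cap\E(s)$ with $\l(y-e_1)=\l(y)-1$, and the window shift shows $y-e_1\in\E_1(s)$, so $y=(y-e_1)+e_1$ has the required form.

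The one nontrivial point — and the only place the hypothesis $s\ge N_S$ is used — is verifying that $y_1\ge1$ for every $y\in\E_1(s+a_1)$. Suppose instead $y_1=0$. Then, using $a_2\le a_i$ for all $i\ge2$,
\[
\l(y)=\sum_{i=2}^{p}y_i\ \le\ \frac1{a_2}\sum_{i=2}^{p}y_ia_i\ =\ \frac{s+a_1}{a_2}.
\]
Since $s+a_1\ge N_S$, Lemma \ref{ceros1} applied with the argument $s+a_1$ gives $\l(X_2(s+a_1))\le\l(X_1(s+a_1)+\v w)$, that is $(s+a_1)/a_2\le(s+a_1)/a_1+\l(\v w)$. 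Combining the two estimates yields $\l(y)\le(s+a_1)/a_1+\l(\v w)$, contradicting the defining \emph{strict} inequality $\l(y)>(s+a_1)/a_1+\l(\v w)$ of $\E_1(s+a_1)$. Hence $y_1\ge1$, which finishes the first equality.

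For $\E_3(s+a_p)=\{x+e_p\mid x\in\E_3(s)\}$ I would run the symmetric argument: adding $e_p$ to $x\in\E_3(s)$ raises $\l$ by $1$ and carries the window $s/a_p\le\l(\cdot)<s/a_p+\l(\v w')$ onto the window defining $\E_3(s+a_p)$; conversely, for $y\in\E_3(s+a_p)$ one shows $y_p\ge1$ by assuming $y_p=0$, bounding $\l(y)=\sum_{i=1}^{p-1}y_i\ge\frac1{a_{p-1}}\sum_{i=1}^{p-1}y_ia_i=(s+a_p)/a_{p-1}$ via $a_i\le a_{p-1}$ for $i\le p-1$, and using the second inequality of Lemma \ref{ceros1}, $\l(X_p(s+a_p)+\v w')\le\l(X_{p-1}(s+a_p))$, to contradict $\l(y)<(s+a_p)/a_p+\l(\v w')$. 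I expect the coordinate-positivity step ($y_1\ge1$, resp.\ $y_p\ge1$) to be the crux; everything else is the elementary observation that translation by $e_1$ (resp.\ $e_p$) moves the fiber over $s$ to the fiber over $s+a_1$ (resp.\ $s+a_p$) while shifting the length window by the matching amount.
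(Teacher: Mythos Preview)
Your proof is correct and follows essentially the same approach as the paper: both establish the two inclusions directly, with the only nontrivial step being the verification that $y_1\ge1$ (resp.\ $y_p\ge1$) via the bound $\l(y)\le(s+a_1)/a_2$ when $y_1=0$ combined with Lemma~\ref{ceros1}. Your write-up is in fact slightly more careful, explicitly noting that Lemma~\ref{ceros1} is applied at the argument $s+a_1\ge N_S$ and spelling out the symmetric $\E_3$ case.
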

\begin{proof}
If $x\in \E_1(s)$, then $s/a_1+\l(\v w)< \l(x)\leq s/a_1$. Thus,
$(s+a_1)/a_1+\l(\v w)< \l(x)+1=\l(x+e_1)\leq (s+a_1)/a_1$, and therefore $x+e_1\in\E_1(s+a_1)$.

Let $y=(y_1,\dots,y_p)$ be an element of $\E_1(s+a_1)$.
Note that $(s+a_1)/a_1+\l(\v w)<\l(y)\leq (s+a_1)/a_1$, and thus
$s/a_1+\l(\v w)<\l(y-e_1)\leq s/a_1$.
If $y_1>0$,  then $y-e_1\in\E_1(s)$, and thus $y=(y-e_1)+e_1$ with $y-e_1\in\E_1(s)$.
Now assume  $y_1=0$.
The elements $y$ and $((s+a_1)/a_2)e_2$ are both in $\H_{s+a_1}$. Thus,
$y_2a_2+\dots+y_pa_p=((s+a_1)/a_2)a_2=s+a_1$.
Since $a_1<\dots<a_p$ and $y\in \Q^p_{\geq}$, we obtain that $\l(y)\leq (s+a_1)/a_2=\l(((s+a_1)/a_2)e_2)$. By Lemma \ref{ceros1}, $\l(X_2(s+a_1))=(s+a_1)/a_2\leq \l(X_1(s+a_1)+\v w)=(s+a_1)/a_1+\l(\v w)$. Thus $\l(y)\leq (s+a_1)/a_2\leq (s+a_1)/a_1+\l(\v w)$,  contradicting the fact that
$y\in \E_1(s+a_1)$.

Using
the same argument, we
also have  $\E_3(s+a_p)=\{x+e_p\mid x\in \E_3(s)\}$.
\end{proof}

\begin{corollary}
Let $S=\langle a_1,\dots,a_p\rangle$ be a numerical monoid. Then
$\Delta(S)=\bigcup_{
s\in S, s\leq N_S+a_p-1
}\Delta(s).$
Furthermore, $\Delta(S)$ is periodic from $N_S$ with period $\lcm(a_1,a_p)$.
\end{corollary}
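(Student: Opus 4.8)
The statement has two parts: first, that $\Delta(S)=\bigcup_{s\in S,\ s\leq N_S+a_p-1}\Delta(s)$; second, that $\Delta(S)$ is periodic from $N_S$ with period $\lcm(a_1,a_p)$. The plan is to reduce the periodicity claim to Theorem \ref{pr:periodo} and Corollary \ref{cor:partes}, and then deduce the finite-union formula as an immediate consequence.

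\medskip

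\emph{Step 1: Periodicity of the pieces.} Fix $s\geq N_S$. By Theorem \ref{pr:periodo}, $\E_1(s+a_1)=\{x+e_1\mid x\in\E_1(s)\}$, and since $\l(x+e_1)=\l(x)+1$, translation by $e_1$ shifts every length by exactly $1$; hence $\Delta(\E_1(s+a_1))=\Delta(\E_1(s))$. Iterating, $\Delta(\E_1(s+ka_1))=\Delta(\E_1(s))$ for all $k\in\N$. Symmetrically, $\Delta(\E_3(s+a_p))=\Delta(\E_3(s))$, so $\Delta(\E_3(s+ka_p))=\Delta(\E_3(s))$ for all $k\in\N$. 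Now let $L=\lcm(a_1,a_p)$, say $L=k_1a_1=k_pa_p$. Then for every $s\geq N_S$ we get $\Delta(\E_1(s+L))=\Delta(\E_1(s))$ and $\Delta(\E_3(s+L))=\Delta(\E_3(s))$ simultaneously. Combining with Corollary \ref{cor:partes}, which gives $\Delta(s)=\Delta(\E_1(s))\cup\{d\}\cup\Delta(\E_3(s))$ for every $s\geq N_S$, we conclude $\Delta(s+L)=\Delta(s)$ for all $s\geq N_S$. This is exactly the assertion that $\Delta(S)$ is periodic from $N_S$ with period $L=\lcm(a_1,a_p)$.

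\medskip

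\emph{Step 2: Collapsing the union.} By definition $\Delta(S)=\bigcup_{s\in S}\Delta(s)=\bigcup_{s\in S,\ s< N_S}\Delta(s)\ \cup\ \bigcup_{s\in S,\ s\geq N_S}\Delta(s)$. For the second union, every $s\in S$ with $s\geq N_S$ can be written $s=s_0+kL$ for some integer $k\geq 0$ and some $s_0$ with $N_S\leq s_0< N_S+L$; by Step 1, $\Delta(s)=\Delta(s_0)$. Hence $\bigcup_{s\in S,\ s\geq N_S}\Delta(s)=\bigcup_{s\in S,\ N_S\leq s< N_S+L}\Delta(s)$, and therefore $\Delta(S)=\bigcup_{s\in S,\ s< N_S+L}\Delta(s)$. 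Since $a_p\leq L=\lcm(a_1,a_p)$, the bound $N_S+a_p-1$ stated in the corollary is in fact stronger than $N_S+L-1$; one recovers it by noting that periodicity already kicks in with period $a_p$ on the $\E_3$ part and period $a_1$ on the $\E_1$ part, but the cleanest route is: it suffices to show $\Delta(s)\subseteq\bigcup_{s'< N_S+a_p}\Delta(s')$ for each $s\geq N_S+a_p$. Write $s=s'+a_p$ with $s'\geq N_S$; then $\Delta(\E_3(s))=\Delta(\E_3(s'))$ by Theorem \ref{pr:periodo}, and for the $\E_1$ part, since $s=s'+a_p\geq N_S+a_p$ we may instead subtract enough copies of $a_1$; more directly, one shows $\Delta(\E_1(s))\subseteq\bigcup_{N_S\leq t<N_S+a_p}\Delta(\E_1(t))$ because $\{\Delta(\E_1(t))\}_{t\geq N_S}$ is periodic with period $a_1\mid \lcm$, hence constant on residue classes, and every residue class mod $a_1$ meets $[N_S,N_S+a_p)$. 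Combining, $\Delta(s)\subseteq\bigcup_{N_S\leq t<N_S+a_p}\Delta(t)$, which gives the claimed bound $N_S+a_p-1$.

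\medskip

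\emph{Main obstacle.} The genuinely new content is all in Theorem \ref{pr:periodo} and Corollary \ref{cor:partes}, which are already available; so the remaining difficulty is purely bookkeeping. The one place that needs a little care is the passage to the sharp bound $N_S+a_p-1$ rather than $N_S+\lcm(a_1,a_p)-1$: one must argue that the $\E_1$-contribution at large $s$ is already captured by the finitely many values $t\in[N_S,N_S+a_p)$, using that $\Delta(\E_1(\cdot))$ is periodic with period $a_1$ and that the interval $[N_S,N_S+a_p)$ contains a full set of residues modulo $a_1$ (since $a_1< a_p$). The same observation with the roles reversed handles $\E_3$. I would present Step 1 cleanly via the period $\lcm(a_1,a_p)$ for the periodicity statement, and then give the short residue-class argument for the sharpened union bound.
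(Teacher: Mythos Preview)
Your proof is correct and follows essentially the same approach as the paper: both derive $\Delta(\E_1(s+a_1))=\Delta(\E_1(s))$ and $\Delta(\E_3(s+a_p))=\Delta(\E_3(s))$ from Theorem~\ref{pr:periodo}, combine with Corollary~\ref{cor:partes} to get the decomposition $\Delta(s)=\Delta(\E_1(s))\cup\{d\}\cup\Delta(\E_3(s))$, and from there read off both the periodicity with period $\lcm(a_1,a_p)$ and the finite union. The only difference is that the paper states the bound $N_S+a_p-1$ without justification, whereas you make explicit the residue-class argument (the interval $[N_S,N_S+a_p)$ meets every class mod $a_1$ and every class mod $a_p$ because $a_1<a_p$), which is exactly what is needed to sharpen from $N_S+\lcm(a_1,a_p)-1$ to $N_S+a_p-1$.
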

\begin{proof}
From Theorem \ref{pr:periodo}, it follows that $\Delta(\E_1(s+a_1))=\Delta(\E_1(s))$ and $\Delta(\E_3(s+a_p))=\Delta(\E_3(s))$ for all $s\geq N_S$. By Corollary \ref{cor:partes}, we obtain  $\Delta(S)=\cup_{s=0}^{N_S+a_p-1}\Delta(s)$.

Theorem \ref{pr:periodo} gives us that in the set $\{n\in\N\mid n\geq N_S\}$ the function $\Delta(\E_1(s))$ is periodic with period $a_1$ and  $\Delta(\E_3(s))$ is periodic with period to $a_p$. Hence, $\Delta(s)$ is periodic with period $\lcm(a_1,a_p)$.
\end{proof}

\begin{remark}
If $s\geq N_S$, then $d\in\Delta(s)$. Thus,  $\E(s)\neq\emptyset$, and so
the number $N_S-1$ is greater than the Frobenius number of $S$.
\end{remark}

\renewcommand{\gcd}{\textrm{gcd}}

\section{Formulation of $N_S$}\label{sec:formulas}
From Proposition \ref{pr:existencia}, there exists  $N_S\in\N$ fulfilling that
$R(N_S)$ , $R'(N_S)$,
$P_2(N_S)+\v h+(p-2)\v {q_i}$
and
$P_{p-1}(N_S)-\v h+(p-2)\v {q_i}$ belong to $\Q^p_{\geq}$.
To compute $N_S$,  we proceed as follows for every $i\in\{2,\dots,p-1\}$:
\begin{enumerate}
\item Consider the element $P_2(s)+\v h+(p-2)\v {q_i}$. This element is equal to
\[\begin{multlined}
\left(
\frac{(p-2) \left(a_i-a_p\right)}{\gcd \left(a_i-a_1,a_1-a_p,a_p-a_i\right)}+\frac{d a_p}{a_p-a_1}+\frac{s \left(a_2-a_p\right)}{a_2 \left(a_1-a_p\right)}
\right)
e_1\\
+\frac{(p-2) \left(a_p-a_1\right)}{\gcd \left(a_i-a_1,a_1-a_p,a_p-a_i\right)}
e_i\\
+
\left(
\frac{(p-2) \left(a_1-a_i\right)}{\gcd \left(a_i-a_1,a_1-a_p,a_p-a_i\right)}+\frac{a_1 d}{a_1-a_p}+\frac{\left(a_1-a_2\right) s}{a_2 \left(a_1-a_p\right)}
\right)
e_p.
\end{multlined}\]
Its $i$th coordinate is always positive and  increasing the value of $s$ we may obtain an element of $\Q^p_{\geq}$.
Besides, if $P_2(s)+\v h+(p-2)\v {q_i}\in\Q^p_{\geq}$, then for every $s'\geq s$ the element
$P_2(s')+\v h+(p-2)\v {q_i}$ is also in $\Q^p_{\geq}$.
Note also that, since
$((p-2)\v {q_i})_1=\frac{(p-2) \left(a_i-a_p\right)}{\gcd \left(a_i-a_1,a_1-a_p,a_p-a_i\right)}\leq 0$ and
$((p-2)\v {q_i})_p=\frac{(p-2) \left(a_1-a_i\right)}{\gcd \left(a_i-a_1,a_1-a_p,a_p-a_i\right)}\leq 0$,
if $P_2(s)+\v h+(p-2)\v {q_i}\in\Q^p_{\geq}$, then  $R(s)=P_2(s)+\v h\in\Q^p_{\geq}$.
\item  Solve the linear equation $(P_2(s)+\v h+(p-2)\v {q_i})_{p}=0$ on $s$. Its solution is
\begin{equation}\label{s1}
S_i=
-\frac{a_2 \left(a_1 d \gcd \left(a_i-a_1,a_1-a_p,a_p-a_i\right)+(p-2) \left(a_1-a_i\right) \left(a_1-a_p\right)\right)}{\left(a_1-a_2\right) \gcd \left(a_i-a_1,a_1-a_p,a_p-a_i\right)}
.
\end{equation}
    The element $P_2(S_i)+\v h+(p-2)\v {q_i}$ is equal to
    \[\begin{multlined}
-\frac{a_2 d \gcd \left(a_i-a_1,a_1-a_p,a_p-a_i\right)+(p-2) \left(a_2-a_i\right) \left(a_1-a_p\right)}{\left(a_1-a_2\right) \gcd \left(a_i-a_1,a_1-a_p,a_p-a_i\right)}
e_1\\
+
\frac{(p-2) \left(a_p-a_1\right)}{\gcd \left(a_i-a_1,a_1-a_p,a_p-a_i\right)}
e_i,
\end{multlined}\]
it is an element of $\Q_{\geq}^p$, and therefore $R(s)\in\Q^p_{\geq}$ and $P_2(s)+\v h+(p-2)\v {q_i}\in\Q^p_{\geq}$ for all $s\geq S_i$, $s\in\N$.
\item
We proceed similarly with the elements $P_{p-1}(s)-\v h+(p-2)\v {q_i}$ for every  $i=2,\dots,p-1$.
In this case the solution of $(P_{p-1}(s)-\v h+(p-2)\v {q_i})_1=0$ is
\begin{equation}\label{s2}
S'_i=
\frac{a_{p-1} \left((p-2) \left(a_1-a_p\right) \left(a_p-a_i\right)-d a_p \gcd \left(a_i-a_1,a_1-a_p,a_p-a_i\right)\right)}{\left(a_{p-1}-a_p\right) \gcd \left(a_i-a_1,a_1-a_p,a_p-a_i\right)}.
\end{equation}
It is easy to check that $P_{p-1}(S'_i)-\v h+(p-2)\v {q_i}$ is again an element of $\Q_{\geq}^p$ and $R'(s),P_{p-1}(s)-\v h+(p-2)\v {q_i}\in\Q^p_{\geq}$ for all  $s\geq S'_i$, $s\in \N$.
\item
We set $N_S$ as the least integer greater than or equal to all the $2(p-2)$ solutions obtained:
\begin{equation}\label{forumula:NS}
N_S=\lceil \max(\{S_i\mid i=2,\dots,p-1\}\cup\{S'_i\mid i=2,\dots,p-1\})\rceil,
\end{equation}
where $\lceil x \rceil$ represents  the smallest integer not less than $x$. This value  fulfills  the same properties than $N_S$ in Proposition \ref{pr:existencia}.
\end{enumerate}

If we see our bound as a rational function on the variable $a_p$,  the numerator has degree $2$ and the denominator has degree $1$. So, our bound increase linearly on $a_p$.
On the contrary, the bound appearing in \cite{Chapman-Hoyer-Kaplan} is a polynomial of degree $2$ on $a_p$.
The results of Table \ref{t1} confirm the expected behaviors for both bounds.

\begin{example}
Let
$S=\langle 15,17,27,35\rangle $. In this case
the group $M$ is generated by
$\{(-13,1,4,2),(-9,0,5,0),(-7,0,0,3)\}$. The lengths of these vectors are $-6$, $-4$ and $-4$. Since $\gcd(6,4,4)=2$, the value of $d$ is equal to $2$. The solutions are the following
\[\begin{array}{l}
S_2=-\frac{a_2 \left(a_1 d \gcd \left(a_2-a_1,a_1-a_4,a_4-a_2\right)+\left(a_1-a_2\right) \left(a_1-a_4\right) (p-2)\right)}{\left(a_1-a_2\right) \gcd \left(a_2-a_1,a_1-a_4,a_4-a_2\right)}=595,\\
S_3=-\frac{a_2 \left(a_1 d \gcd \left(a_3-a_1,a_1-a_4,a_4-a_3\right)+\left(a_1-a_3\right) \left(a_1-a_4\right) (p-2)\right)}{\left(a_1-a_2\right) \gcd \left(a_3-a_1,a_1-a_4,a_4-a_3\right)}=1275,\\
S'_2=\frac{a_3 \left(\left(a_1-a_4\right) \left(a_4-a_2\right) (p-2)-a_4 d \gcd \left(a_2-a_1,a_1-a_4,a_4-a_2\right)\right)}{\left(a_3-a_4\right) \gcd \left(a_2-a_1,a_1-a_4,a_4-a_2\right)}=\frac{5805}{4}=1451.25,\\
S'_3=\frac{a_3 \left(\left(a_1-a_4\right) \left(a_4-a_3\right) (p-2)-a_4 d \gcd \left(a_3-a_1,a_1-a_4,a_4-a_3\right)\right)}{\left(a_3-a_4\right) \gcd \left(a_3-a_1,a_1-a_4,a_4-a_3\right)}=\frac{2025}{4}=506.25,
\end{array}\]
and thus
$N_S=\lceil\max(595,1275,1451.25,506.25)\rceil=1452$.
\end{example}

\section{Computation of $\Delta(S)$}\label{sec:compativa}

Lemma \ref{lema:d} and
Formula \ref{forumula:NS} give us $d$ and $N_S$, respectively.
Both values are needed to compute $\Delta(S)$.
The next step in our algorithm is the computation of
$\E(N_S+a_p-1),\dots,\E(N_S+a_p-1-a_1-1)$; each of these sets are the
nonnegative integer solutions of a Diophantine equation.
From these sets we define the sets
\[\Omega(s)=\{(x_1,\l(x))\mid x\in\E(s),~x_1=\max\{y_1\mid y\in\E(s),~\l(y)=\l(x)\}\}\]
for all $s=N_S+a_p-1,\dots, N_S+a_p-1-a_1-1$.
The second coordinate $\l(x)$ is used in the computation of $\Delta(s)$; just project the second coordinate of the elements of $\Omega(s)$, order this set obtaining $\{l_1<\dots<l_{t_s}\}$ and compute $\{l_{i+1}-l_i\mid i=2,\dots,t_s\}$.
It is straightforward to prove that
for every $s\in\N$ the set $\E(s-a_1)$ is equal to $\{(x_1,\dots,x_p)-e_1\mid (x_1,\dots,x_p)\in\E(s),~x_1\geq 1\}$, and therefore
$\Omega(s-a_1)=\{(x_1-1,\l(x)-1)\mid (x_1,\l(x))\in\Omega(s),~x_1> 0\}$.
This allows us to obtain all the sets $\Omega(s)$ with $s\in S$ and $s< N_S+a_p-1-a_1-1$
from the sets $\Omega(N_S+a_p-1),\dots,\Omega(N_S+a_p-1-a_1-1)$.

Algorithm \ref{algorithm} collects the different improvements made in this work and computes the Delta set of a given numerical monoid. In addition, note that the computation of the sets $\E(*)$ and $\Omega(*)$ in steps  2 to 5 could be done in a parallel way.
Our implementation of this algorithm in \cite{programita}  takes into account this fact.

\begin{algorithm}\label{algorithm}
The input is $S=\langle a_1,\dots,a_p\rangle$ a numerical monoid.
The output is the set $\Delta(S)$.
\begin{enumerate}
\item Using Lemma \ref{lema:d}, compute $d$.
\item Using Equations (\ref{s1}), (\ref{s2}) and (\ref{forumula:NS}), compute $N_S$.
\item Compute the sets $\E(N_S+a_p-1),\dots,\E(N_S+a_p-1-a_1-1)$.
\item Compute the sets $\Omega(N_S+a_p-1),\dots,\Omega(N_S+a_p-1-a_1-1)$.
\item Compute $\{\Omega(s)\mid s\in S,~s\leq N_S+a_p-1\}$, using the sets of the preceding step.
\item Compute $\Upsilon=\{\Delta(s)\mid s\in S,~s\leq N_S+a_p-1\}$,
using the sets of the preceding step.
\item Return $\Delta(S)=\cup_{ \Psi \in \Upsilon}\Psi$.
\end{enumerate}
\end{algorithm}

Solve a problem by using an exhaustive search usually have two computational disadvantages: the common high size of the bounds and the complexity to check the corresponding properties; but these computational troubles have to be assumed if there exists no other way to solve the problem.
 The algorithm method introduced in this work and the method that appears in \cite{Chapman-Hoyer-Kaplan} are based on computing the set of factorizations of some bounded elements belonging to a numerical monoid. The size of our bound $N_S+a_p-1$ (equation (\ref{forumula:NS})) seems to be better than the bound of \cite[Theorem 1]{Chapman-Hoyer-Kaplan}.

Another remarkable fact is that for each element
in the set $\{N_S+a_p-1,\dots,N_S+a_p-1-a_1-1\}$, it is necessary  to compute its set of factorizations by solving its defining equation. This is a high computational complexity problem ($\mathrm{NP}$-complete problem) although there exist some specific algorithms for solving a unique Diophantine equation (see \cite{Fortenbacher}). Anyway, since our  bound reduces the size of the checking region, Algorithm \ref{algorithm} needs to do less high complexity computations than the algorithm obtained from \cite[Corollary 3]{Chapman-Hoyer-Kaplan}. So the usual computational disadvantages are partly avoided.

The above computational considerations are reflected in the following examples.

\begin{example}
We compare our bound with the bound of \cite{Chapman-Hoyer-Kaplan}. Using the functions
{\tt ChapmanBound} and {\tt Bound} of \cite{programita} we obtain Table \ref{t1}. For the semigroup $S=\langle 15,16,27\rangle$ these functions can be used as follows:
\begin{verbatim}
In[1]:= Bound[{15, 16, 27}]
Out[1]= 446
...
In[2]:= ChapmanBound[{15, 16, 27}]
Out[2]= 70224
\end{verbatim}

\begin{table}[ht]
\caption{Comparison with bound of \cite{Chapman-Hoyer-Kaplan}}
\centering
\begin{tabular}{c| c| c}
\hline
Semigroup & Bound of \cite{Chapman-Hoyer-Kaplan} & $N_S+a_p-1$\\
\hline $\langle 15,16,27\rangle$ & $70224$ & $446$\\
$\langle 37, 59, 101\rangle $ & $3613337$ & $2208$\\
$\langle 201, 451, 577\rangle$ & $900996525$ & $89119$\\
$\langle 15, 17, 27, 35\rangle $ & $166855$ & $1466$\\
$\langle 100, 121, 142, 163, 284\rangle$ & $97605860$ & $201052$\\
$\langle 1001, 1211, 1421, 1631, 2841\rangle$ & $97744425121$ & $2064141$\\
\end{tabular}
\label{t1}
\end{table}

\noindent
Clearly, our bound is lower that  the bound of Theorem 1 of \cite{Chapman-Hoyer-Kaplan}.

\end{example}

\begin{example}

To compute the Delta set of a numerical monoid can be used the function {\tt DeltaSNParallel} (see \cite{programita}) as follows:
\begin{verbatim}
In[3]:= AbsoluteTiming [DeltaSNParallel[{4, 6, 15}]]
Out[3]={0.018001, {1, 2, 3}}
\end{verbatim}
The returned value is $\{0.018001,\{1,2,3\}\}$ where $0.018001$ is the time required for its computation, and $\{1,2,3\}$ is the Delta set of the numerical monoid $S=\langle 4,6,15\rangle$.
Table \ref{t2} contains the computation of $\Delta(S)$ for some numerical monoids. All these examples have been done in an Intel Core i7 with 16 GB of main memory using the parallel version of the program.

\begin{table}[ht]
\caption{Computation of $\Delta(S)$}
\centering
\begin{tabular}{c|c|c|c|c}\hline
Semigroup & Bound of \cite{Chapman-Hoyer-Kaplan} & $N_S+a_p-1$ &  $\Delta(S)$  & Time (seconds)\\
\hline $\langle 4,6,15\rangle$ & $8124$ & $81$ & $\{1, 2, 3\}$ & $0.018$\\
$\langle 4, 6, 199\rangle $ & $1425660$ &$1185$ & $\{1,2,\dots,65\}$ & $0.152$\\
$\langle 11,37,52,93\rangle$ & $2560511$ & $5952$ & $\{1,\dots,21\}$ & $5.205$\\
$\langle 51, 53, 55, 117\rangle$ & $5806839$ &$9749$ & $\{2,4,6\}$ & $6.962$\\
$\langle 11,53,73,87\rangle$ & $3209839$ & $14391$ & $\{2,4,6,8,10,22\}$ & $33.098$\\
$\langle 11, 53, 73, 81\rangle $ & $2782447$ & $6599$ & $\{2,4,6,8,10,12\}$ & $3.713$\\
$\langle 7,15,17,18,20\rangle $ & $60105$ & $1941$ & $\{1,2,3\}$ & $151.668$\\
$\langle 10,17,19,25,31 \rangle $ & $163540$ & $1189$ & $\{1,2,3\}$ & $8.629$\\
$\langle 10,17,19,21,25\rangle $ & $106420$ & $2031$ & $\{1,2\}$ & $102.656$ \\
$\langle 7,19,20,25,29\rangle$ & $159923$ & $3900$ & $\{1,2,3,5\}$ & $878.702$\\
$\langle 31,73,77,87,91 \rangle$ & $6047393$ &$31394$ & \{2, 4, 6\} & $24012.245$
\end{tabular}
\label{t2}
\end{table}
\end{example}

\end{document}